\newtheorem{theorem}{Theorem}
\newtheorem{lemma}{Lemma}
\newtheorem{proposition}{Proposition}
\newtheorem{definition}{Definition}
\newtheorem{corollary}{Corollary}
\begin{document}
\title[ENDLINE BILINEAR CONE RESTRICTION ESTIMATE]
      {An endline bilinear cone restriction estimate for mixed norms}
      
\author{Faruk Temur}
\address{Department of Mathematics\\
        University of Illinois at Urbana-Champaign
        Urbana, IL 61820}
\email{temur1@illinois.edu}
\thanks{The author was supported by NSF grant DMS-0900865. The author thanks to his advisor M.B. Erdo\~{g}an for the financial support}
\keywords{Bilinear cone restriction, mixed norm, endline}
\subjclass[2000]{Primary: 42B10; Secondary: 35L05}
\date{November 5, 2010}    

\begin{abstract}
We prove an $L^2 \times L^2 \rightarrow L_t^qL_x^p $ bilinear Fourier extension estimate for the cone when $p,q$ are on the critical line $1/q=(\frac{n+1}{2})(1-1/p)$. This extends previous results by Wolff, Tao and Lee-Vargas. 
\end{abstract}

\maketitle

\section{Introduction}\label{intro}

Let $n\geq1$ be an integer and let $f \in L^p(\textbf{R}^{n+1})$. If $p=1$, then $\widehat{f}$ is a continuous function, and hence can be restricted to any hypersurface; whereas if $p=2$,  $\widehat{f}$ can be an arbitrary square integrable function meaning that it cannot be restricted to any measure zero set.  Restriction problem concerns what happens if $1<p<2$.
It is easy to concoct examples showing that we cannot meaningfully restrict $\widehat{f}$ to a hyperplane if $p>1$. So it was a surprising discovery when Stein in 1967 observed that for curved hypersurfaces the situation is different, and one can, for certain values of $p$ depending on the surface chosen, restrict $\widehat{f}$. Restriction problem, essentially, is to determine the range of $p$.

The rigorous formulation of this problem is as follows. Let S be a smooth compact hypersurface with boundary in $\textbf{R}^{n+1}$. We say that the linear restriction estimate $R_S(p \rightarrow q)$ holds if 
\begin{equation}
\|\widehat{f} \|_{L^q(S,d\sigma)} \leq C_{p,q,S}\|f\|_{L^{p}(\textbf{R}^{n+1})}
\end{equation}for all Schwartz functions $f$ on $\textbf{R}^{n+1}$. Equivalently one can use the following formulation. We say that that linear adjoint restriction estimate $R^*_S(p \rightarrow q)$ holds if 
\begin{equation}\label{eqi2}
\|\widehat{fd\sigma} \|_{L^q(\textbf{R}^{n+1})} \leq C_{p,q,S}\|f\|_{L^{p}(S,d\sigma)}
\end{equation}
for all $C^{\infty}$ functions $f$ on $S$. 

This problem was posed by Stein in \cite{S1}. It is well understood for $n=1$ but wide open  for $n\geq2$. It is known to be connected to other central problems in harmonic analysis such as the Kakeya conjecture and the Bochner-Riesz conjecture; for the exact nature of these connections see e.g. \cite{W2}, \cite{La}, \cite{T3}, \cite{JB2}.

Consider the following variant of this problem: Let $S_1, S_2$ be two smooth compact hypersurfaces in $\textbf{R}^{n+1}$ with Lebesgue measure $d\sigma_1$ and $d\sigma_2$ respectively. We say that bilinear adjoint restriction estimate $R^*_{S_1,S_2}(2\times2 \rightarrow q)$ holds if one has 
\begin{equation}\label{be}
\|\widehat{f_1d\sigma_1}\widehat{f_2d\sigma_2}\|_{L^q(\textbf{R}^{n+1})}\leq C_{q,S_1,S_2}\|f_1\|_{L^2(S_1,d\sigma_1)}\|f_2\|_{L^2(S_2,d\sigma_2)}
\end{equation}
for all smooth functions $f_1, f_2$ supported respectively on $S_1,S_2$ . Historically the first incentive to study this problem  was to attack (\ref{eqi2}) in the special case $q=4$ by squaring both sides  and studying the resulting bilinear estimate; see e.g. \cite{F}, \cite{Sj}. Later this idea was extended to other values of $q$. In \cite{JB1}, \cite{TVV} it was observed that  if $S_1, S_2$  satisfy certain transversality conditions,  further estimates that are not available for arbitrary $S_1,S_2$ are available. What is more, these estimates then can be used to obtain new linear restriction estimates; see \cite{TVV}, \cite{TV1}, \cite{W1}. These advantages motivate study of this type of restriction estimates, which at a first look seems more complicated and  less hopeful. 

Let $S_1, S_2$ be compact, transverse subsets of the light cone
\[\{(x,t)\in \textbf{R}^{n+1}: |x|=|t| \}\]
or compact, transverse subsets of the paraboloid
\[\{(x,t)\in \textbf{R}^{n+1}: t=-\frac{1}{2}|x|^2\}.\]
The study of bilinear extension estimates for such $S_1, S_2$ dates back to  Carleson-Sj\"{o}lin Theorem, which states that $R^*_{S_1,S_2}(2\times2 \rightarrow q)$ holds for $q=2$ when $n=1$; see \cite{CS}. This theorem is known to be optimal, that is, for $n=1$ going below  $q=2$ is not possible.  For the cone case,  Bourgain proved that going below the exponent $2$ is possible when $n=2$, and that $q\geq 2-13/2048$ is enough; see \cite{JB1}. Then in 1997 Klainerman and Machedon observed that for $n\geq 2$ the condition $q \geq \frac{n+3}{n+1}$ is necessary, and conjectured that this condition suffices. 

For the cone case further progress came from Tao and Vargas, who proved that when $n=2$, it suffices to have $q \geq 2-8/121$  in \cite{TV1}. Then Wolff made a great breakthrough and settled the conjecture of Klainerman and Machedon except for the endpoint in \cite{W1}, with the endpoint being attained shortly afterwards by Tao in \cite{T1}.  

In the paraboloid case, first  progress came when Tao, Vargas and Vega in \cite{TVV} proved that for the special case $n=2$,  $q\geq 2-5/69$ suffices. Tao and Vargas furthered this to $q \geq 2-2/17$ in \cite{TV1}.  Finally  in \cite{T2} Tao proved the conjecture except for the endpoint.  Endpoint is known only for the cylindrically symmetric case; see \cite{SS}.

Resolution of Klainerman-Machedon conjecture is important also  for its applications to various problems. One important application is to PDE. The cone and the paraboloid are related to solutions of the wave equation and the Schr\"{o}dinger equation respectively.  One can thus reformulate these results in terms of solutions of the wave equation and the Schr\"{o}dinger equation, and then apply these to obtain null form estimates, which, in turn, are important for the study of nonlinear PDE; see \cite{FK},  \cite{T1}, \cite{DT}, \cite{LV1}, \cite{LVR}. Actually we will also use this PDE formulation in this paper. It is this connection that motivated study of these conjectures for mixed norms, and progress has been made in \cite{LV1} in this direction. A second application is to the Bochner-Riesz conjecture: the progress in the paraboloid case was used to obtain the best known exponent for the Bochner-Riesz problem; see \cite{LBR}. Another application is  to the Falconer distance set problem. Via proving a weighted  bilinear restriction estimate  for the paraboloid, Erdo\~{g}an improved the known bound for the distance set problem; see \cite{BE1},\cite{BE2}. This idea also applied to obtain bounds for distance sets defined with respect to non-Euclidean distance functions; see \cite{La} and references therein.

There has also been some effort to extend the known  results for the cone and the paraboloid to more general curved surfaces; see  \cite{Lee}, \cite{V1}.
         
We now restrict our attention to the cone case which our result concerns. We reformulate the problem as mentioned above. Let a function $\phi :\textbf{R}^{n+1}\rightarrow H$ be a red wave if $H$ is a finite dimensional complex Hilbert space , and  if its spacetime Fourier transform $\widehat{\phi}$ is an $L^2$ measure on the set
\[ \Sigma^{R}:=\left\{ (\xi,|\xi|) \in \textbf{R}^{n+1}: \angle(\xi,e_1) \leq \pi/8 ,\   1 \leq |\xi| \leq 2 \right\}\]
where $e_1$ is a fixed basis vector. Similarly let a function  $\psi :\textbf{R}^{n+1}\rightarrow H'$ be a blue wave  if $H'$ is a finite dimensional complex Hilbert space , and if $\widehat{\psi}$ is an $L^2$ measure on the set
\[ \Sigma^{B}:=\left\{ (\xi,-|\xi|) \in \textbf{R}^{n+1}: \angle(\xi,e_1) \leq \pi/8 ,\  1 \leq |\xi| \leq 2 \right\}.\]
Let \textit{energy} for red and blue waves be defined as 
\begin{equation}\label{eq1.1}
	E(\phi):=\left\| \phi(t) \right\|^2_2,\  E(\psi):=\left\| \psi(t) \right\|^2_2 
\end{equation}
where $\phi(t)$, $\psi(t)$ are given by $\phi(t)(x):=\phi(x,t)$, $\psi(t)(x):=\psi(x,t)$. This definition is independent of time $t$. Then by the results of Wolff and Tao 
\begin{equation}\label{e1.2}
 \left\|\phi \psi \right\|_p \lesssim  E(\phi)^{1/2}E(\psi)^{1/2} 
\end{equation}
holds for $p \geq \frac{n+3}{n+1}$. Here and in what follows implicit constants do not depend on $H, H'$, and $\phi\psi:\textbf{R}^{n+1} \rightarrow H\otimes H'$ where $\otimes$ denotes the tensor product. The following is our main theorem.
\begin{theorem}\label{t1.2}

Let $\phi,\psi$ be respectively red and blue waves. Let $1 < p,q \leq 2$ be such that $1/q \leq (\frac{n+1}{2})(1-1/p)$ and $1/q <\min(1,\frac{n+1}{4})$. Then we have
\begin{equation}\label{eqi.3}
 \left\|\phi \psi \right\|_{L_t^qL_x^p} \lesssim E(\phi)^{1/2}E(\psi)^{1/2} .
\end{equation}

\end{theorem}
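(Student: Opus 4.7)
My plan is to obtain \ref{eqi.3} on the critical line $1/q = \frac{n+1}{2}(1-1/p)$ by combining two easy endpoints on that line with an endline refinement in the spirit of Tao's endpoint proof of Wolff's inequality; the region strictly below the critical line then follows by further interpolation against the trivial $L_t^\infty L_x^2$ and $L_t^\infty L_x^1$ bounds.

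Two endpoints on the critical line come essentially for free. At the corner $(1/p,1/q)=(1,0)$, since $E(\phi)=\|\phi(t)\|_2^2$ does not depend on $t$, Cauchy--Schwarz in $x$ yields
\[
\|\phi(t)\psi(t)\|_{L_x^1} \leq \|\phi(t)\|_{L_x^2}\|\psi(t)\|_{L_x^2} = E(\phi)^{1/2}E(\psi)^{1/2},
\]
so $\|\phi\psi\|_{L_t^\infty L_x^1}\lesssim E(\phi)^{1/2}E(\psi)^{1/2}$. The Wolff--Tao diagonal point at $p=q=(n+3)/(n+1)$ is supplied directly by (\ref{e1.2}). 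Bilinear complex interpolation between these two endpoints (Riesz--Thorin applied to $\phi\mapsto\phi\psi$ with $\psi$ fixed, and symmetrically) produces (\ref{eqi.3}) on the portion of the critical line with $0\leq 1/q\leq (n+1)/(n+3)$.

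The substantive new content concerns the part of the critical line beyond the Wolff--Tao point, namely $(n+1)/(n+3) < 1/q < \min(1,(n+1)/4)$. Here I would transplant Tao's endpoint argument \cite{T1} into the mixed-norm framework of Lee--Vargas \cite{LV1}: wave-packet decompose both $\phi$ and $\psi$ at a spatial scale $R$, dyadically decompose the time axis, and on each block exploit the transversality between red and blue tubes together with a bilinear $L_x^2$ orthogonality. The strong endline bound is then recovered from a Lorentz improvement $L_t^{q,\rho}L_x^p$ (with $\rho$ slightly larger than $q$) by a real-interpolation / atomic-decomposition argument in the time variable, in the spirit of the Keel--Tao proof of endpoint Strichartz. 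With the critical line in hand, the region $1/q < \frac{n+1}{2}(1-1/p)$ follows by interpolating the critical-line bounds with $L_t^\infty L_x^2$ and $L_t^\infty L_x^1$.

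The main difficulty lies in this last step. The anisotropy of $L_t^q L_x^p$ means that the standard induction on scales cannot be invoked as a black box: one must track carefully how the time-frequency localisation of the wave packets interacts with the outer $L_t^q$ summation. The crux at the endline is to avoid any logarithmic loss when summing over dyadic time scales, and the Lorentz refinement on the time factor is what one expects to use to close this summation without loss.
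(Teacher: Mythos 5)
Your reductions at the start are sound and agree with the paper's own preliminary step (around (\ref{eq2.1})): the $L^\infty_t L^1_x$ corner is Cauchy--Schwarz, the Wolff--Tao point $p=q=\tfrac{n+3}{n+1}$ lies on the critical line, and interpolation between them covers the critical segment $0\leq 1/q\leq \tfrac{n+1}{n+3}$; interpolating the critical line against $L^\infty_t L^2_x$ then fills in the strictly sub-critical region. You also correctly locate the genuinely new content as the arc $\tfrac{n+1}{n+3}<1/q<\min(1,\tfrac{n+1}{4})$ of the critical line. Up to here your roadmap matches the paper.

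The problem is the third paragraph, which is where all the difficulty lives, and where your proposed mechanism is not the paper's and is not obviously correct. You propose a dyadic decomposition of the time axis followed by a Lorentz-space refinement $L^{q,\rho}_t L^p_x$ closed via a Keel--Tao real-interpolation / atomic-decomposition argument. The paper does none of this. It runs Tao's induction-on-scales endpoint argument for the cone (localize to spacetime cubes of side $R$, wave-packet decompose via Proposition~\ref{p4.1}, measure energy concentration $E_{r,C_0Q}$, and use Huygens' principle to pass to double-cone neighborhoods where transversality gives a better $L^1$ bound), with one genuinely new mixed-norm ingredient: Lemma~\ref{l2.1}, the inequality $\bigl\|\sum_j f_j\bigr\|_{L^q_tL^p_x}^q\leq\sum_j\|f_j\|_{L^q_tL^p_x}^q$ for disjointly supported $f_j$ when $q<p$. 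This lemma replaces the plain triangle inequality at exactly the point where the concentrated and non-concentrated regions $\Omega$ and $Q_R\setminus\Omega$ are reassembled in Propositions~\ref{p5.3} and~\ref{p5.6}, and it is what prevents a multiplicative loss from accumulating across scales. The condition $q<p$ (enforced by the choice $0<\epsilon<\tfrac{1}{10n}$ on the perturbed endline exponents) is precisely why the uppermost endpoint $q=1$ is ceded for $n\geq 3$. There is no Lorentz space and no atomic decomposition anywhere in the argument, and there is no dyadic sum ``over time scales'' of the Strichartz type; the only iteration is the geometric one over $c_j=C_0^{-1}cC_0^{(j-J)\epsilon/4C}$ in the corollary to Proposition~\ref{p5.6}, whose convergence is elementary and does not require any Lorentz-space bookkeeping.

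So your proposal contains a genuine gap: having correctly reduced to the hard arc of the critical line, you then reach for the wrong tool. It is conceivable a Keel--Tao style scheme could be made to work here, but you have given no indication of how the bilinear $L^2$ gain at scale $R$ would be turned into a Lorentz estimate, nor how concentration (which worsens as $R$ grows) would be tamed without the Huygens'-principle / cone-neighborhood device. The paper's actual engine is the concentration/non-concentration dichotomy together with Lemma~\ref{l2.1}, and a correct proof should either reproduce that mechanism or offer a concrete substitute, which yours does not.
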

 
 Lee and Vargas proved this theorem with  $1/q < (\frac{n+1}{2})(1-1/p)$ in \cite{LV1}, so we extend that result to the endline. 
 
 We now describe examples showing that the conditions $1/q \leq (\frac{n+1}{2})(1-1/p)$, $1/q <\min(1,\frac{n+1}{4})$ are necessary. These examples are similar to the ones given in \cite{FK}.
 
 \vspace{3mm} 
 
\begin{flushleft}\textbf{Example 1.} Let's denote an element $\xi \in \textbf{R}^{n}$  as $\xi =(\xi_1, \xi')$. Then let \end{flushleft} 
\[S:= \{ \xi \in \textbf{R}^{n}: |\xi_1-3/2|<\epsilon^2, |\xi'|<\epsilon \}.\]
Then one has $|S|\approx \epsilon^{n+1}$. Define two functions $R,B$ on respectively on $\Sigma^{R}$ and $\Sigma^{B}$ as follows
\[R(\xi,|\xi|):=\chi_S(\xi), \ \ \ \  B(\xi,-|\xi|):=\chi_S(\xi).\]
Thus, these are the characteristic functions of projections of the set $S$ to $\Sigma^{R}$ and $\Sigma^{B}$ respectively. Let $d\sigma$ denote the surface measure of the cone. Define red and blue waves $\phi, \psi$ by
\[\widehat{\phi}:=Rd\sigma, \ \ \ \ \widehat{\psi}:=Bd\sigma.\]
So we have 
\[\|\phi\|_2=\|\widehat{\phi}\|_2 \approx \epsilon^{\frac{n+1}{2}}\]
and similarly
\[\|\psi\|_2=\|\widehat{\psi}\|_2 \approx \epsilon^{\frac{n+1}{2}}.\]
On the other hand by the uncertainty principle both $|\phi|,|\psi|$ are comparable to $\epsilon^{n+1}$ on a rectangular box that has a spatial area $\approx \epsilon^{-(n+1)}$ for $|t| \lesssim  \epsilon^{-2} $.
Then one obtains
\[ \|\phi\psi\|_{L_t^qL_x^p} \gtrsim \epsilon^{2n+2}\epsilon^{-2/q}\epsilon^{-(n+1)/p}.\]
Thus we need
\[\epsilon^{2n+2}\epsilon^{-2/q}\epsilon^{-(n+1)/p} \lesssim \epsilon^{n+1}\]
which implies
\[\frac{1}{q}\leq (\frac{n+1}{2})(1-\frac{1}{p}).\]

\vspace{3mm}
\begin{flushleft}\textbf{Example 2.} Let \end{flushleft}
\[
 S_1:=\{\xi \in \textbf{R}^{n}: |\xi_1-3/2|<1/4, |\xi'|<\epsilon \}. \]
Let $S_2$ be the set formed by intersection of a space-time slab of thickness $\epsilon^2$ whose normal is parallel to that of $S_1$ with $\Sigma^B$.  
Then  we have $|S_1|\approx \epsilon^{n-1}$ and $|S_2|\approx \epsilon^2$. Define two functions $R,B$ on respectively on $\Sigma^{R}$ and $\Sigma^{B}$ as follows
\[R(\xi,|\xi|):=\chi_{S_1}(\xi), \ \ \ \  B:=\chi_{S_2}.\]
Thus, $R$ is the characteristic function of projection of the set $S_1$ to $\Sigma^{R}$, and $B$ is the characteristic function of  $S_2$. Let $d\sigma$ denote the surface measure of the cone.
Define red and blue waves $\phi, \psi$ by
\[\widehat{\phi}:=Rd\sigma, \ \ \ \ \widehat{\psi}:=Bd\sigma.\]
So we have 
\[\|\phi\|_2=\|\widehat{\phi}\|_2 \approx \epsilon^{\frac{n-1}{2}}\]
and similarly
\[\|\psi\|_2=\|\widehat{\psi}\|_2 \approx \epsilon.\]
On the other hand by the uncertainty principle  $|\phi|$ is comparable to $\epsilon^{n-1}$, and $|\psi|$ is comparable to $\epsilon^2$ on a rectangular box that has spatial area comparable to $1$ for all $t \lesssim \epsilon^{-2}$.
Thus we obtain
\[\|\phi\psi\|_{L_t^qL_x^p} \gtrsim \epsilon^{n+1}\epsilon^{-2/q}.\]
Hence we need
\[\epsilon^{n+1}\epsilon^{-2/q} \leq \epsilon^{(n+1)/2}.\]
This implies
\[\frac{1}{q} \leq \frac{n+1}{4}.\]

\vspace{2mm}

\begin{flushleft}
\textbf{\textit{List of Notation}}\hfill

$D=D(x_D,t_D;r_D):=\{ (x,t_D): |x-x_D|\leq r_D \}.$ \hfill

$D^{ext}=D^{ext}(x_D,t_D;r_D):= \{(x,t_D): |x-x_D|>r_D)\}.$\hfill

$Q(x_Q,t_Q;r_Q):$  ${n+1}$ dimensional cube in $\textbf{ R}^{n+1}$ centered at $(x_Q,t_Q)$ with side-legth $r_D$ and  sides parallel to axes. Life-span for such a cube is defined to be the interval $[t_Q-\frac{1}{2}r_Q, t_Q+\frac{1}{2}r_Q]$.

$cQ:=Q(x_Q,t_Q;cr_Q)$

$Q^{ann}(x_Q,t_Q;r_1,r_2):=Q(x_Q,t_Q;r_2)\setminus Q(x_Q,t_Q;r_1)$

$\underline{\Sigma}:= \{ \xi \in \textbf{R}^{n} : 1/2\leq |\xi| \leq 4, \angle(\xi,e_1)\leq \pi/4 \}.$

$C^{R}(x_0,t_0):=\{(x_0+r\omega,t_0-r) \in \textbf{R}^{n+1}:r\in R, \omega \in S^{n-1} \cap \underline{\Sigma} \}.$

$C^{B}(x_0,t_0):=\{(x_0+r\omega,t_0+r) \in \textbf{R}^{n+1}:r\in R, \omega \in S^{n-1} \cap \underline{\Sigma} \}.$

$C^{R}(x_0,t_0;r),C^{B}(x_0,t_0;r): r$ neighborhoods of $C^{R}(x_0,t_0), C^{R}(x_0,t_0)$ respectively.

$C^P(x_0,t_0;r):=C^{R}(x_0,t_0;r)\cup C^{B}(x_0,t_0;r)$

\textbf{T}: The time reversal operator given by $\textbf{T}\phi(x,t)=\phi(x,-t).$

\end{flushleft}

%========================================================================================================================================================================================================================================================================================================================================================================================

\section{Preliminaries}\label{s2}

Our proof will mainly follow Tao's proof in \cite{T1}. Here is a sketch of the proof in which we will gloss over tecnical details, and try to convey the main ideas of this  complicated proof. First we will localize the estimate (\ref{eqi.3}) to cubes of side-length $R$.  Then by monotone convergence theorem finding a bound independent of $R$ suffices. We observe that once localized, using the definition of energy and the H\"{o}lder inequality we can obtain a trivial $L^1$ estimate. So if we can prove a favorable $L^2$ estimate, using the H\"{o}lder inequality and interpolation we can control the localized forms of (\ref{eqi.3}). We are not able to prove such an estimate directly for our waves, but we can still apply this strategy partially as follows. We localize our waves to sub-cubes using the standard tool of wave packet decomposition. Then for waves localized to sub-cubes we can obtain favorable $L^2$ estimates on other sub-cubes, which we do by using the wave packet decomposition,  and applying our strategy above yields a constant term. But still we need to estimate localized waves on cubes to which they localized. In this case we do not estimate them at all, and just use the fact that we are estimating them at a lower scale. Thus we are able to bound an estimate at a scale with an estimate at a lower scale plus  a constant.  This is the induction on scales method of Wolff,  using this technique Wolff settled Klainerman-Machedon conjecture for the cone except for the endpoint; see \cite{W1}. Yet    due to endpoint/endline nature of the problem we  have a constant instead of a  negative power of $R$ in the error term, which is the case for the non-endpoint/non-endline problem, so we are not able to run an induction on scales.   Arguments used up to this point are enough to prove non-endpoint or non-endline results, and are used in \cite{W1}, \cite{LV1} to obtain these results.

At this stage one needs the observation that it is the concentration of energy of both $\phi$ and $\psi$  in a disk of small radius compared to side-length of our cubes that troubles us. In the absence of this problem, one can improve the $L^2$ estimate slightly to obtain the endpoint/endline results. But it is also certain that one cannot escape concentration, for concentration as defined here depends on the side-length of the cube to which we localize:   as the scale gets larger previously non-concentrated waves become concentrated. So concentration  must also be dealt with. To do this one needs a second observation: if both $\phi$ and $\psi$  concentrate on a disk then $\phi\psi$ concentrate on the double light cone generated by that disk. This phenomenon is called Huygens' principle. Restricting to this set one can get a better $L^1$ estimate due to transversality of Fourier supports of $\phi$ and $\psi$. This allows one to obtain a better error term when controlling the estimate at scale  $R$ with estimate at a lower scale, and hence do induction on scales without the problem one encounters in the process described   above. This means that if we can pass back and forth between estimates for cubes and estimates for cone neighborhoods, we can exploit this second observation to  deal with the concentration. Having dealt with both concentrated and non-concentrated cases, we combine them to obtain a uniform bound on localized estimates for the cubes. The two observations above are due to Tao and using these he proved the endpoint case of Klainerman-Machedon conjecture for the cone; see \cite{T1}.

Now we perform some reductions.  Firstly, it is clear that it suffices to prove Theorem 1.1 for waves satisfying the energy normalization
\[E(\phi), E(\psi)=1.\]
We will exploit this in some of our propositions. Secondly, it suffices to prove our theorem only for the endline, since for the non-endline cases it is already known  by \cite{LV1}. Finally, observe that
\begin{equation}\label{eq2.1}
\begin{aligned}
\|\phi \psi\|_{L^{\infty}_tL^{1}_x} =\sup_t \int | \phi(t) \psi(t)|dx &\leq \sup_t \|\phi(t)\|_{L^2}\|\psi(t)\|_{L^2}\\ &\leq  E(\phi)^{1/2}E(\psi)^{1/2} .
 \end{aligned}
\end{equation}
So if (\ref{eqi.3}) is correct for $1< p,q < \infty$ such that $1/q <\min (1,(n+1)/4), \ 1/q=\frac{n+1}{2}(1-\frac{1}{p})$, then interpolating with (\ref{eq2.1}) it is correct for each point on the line  between $(1,\infty)$ and $(p,q)$. So it is enough to prove that (\ref{eqi.3})  holds for $(p,q)$ with $q$ arbitrarily close to $\min (1,(n+1)/4)$. Hence for any $n \geq 2$ fix $0< \epsilon < \frac{1}{10n}$ and let $1 \leq p,q \leq \infty$ be such that  $1/q=\min (1,(n+1)/4)-\epsilon$, $1/q=\frac{n+1}{2}(1-\frac{1}{p})$. Requirement on $\epsilon $ ensures $p>q$, so we will be able to use Lemma \ref{l2.1} below.
 
 Now we introduce the constants that will be used throughout the proof. Let $N$ denote the large integer $N=2^{n^{10}}$, thus N depends only on n. Let $C_0=2^{\left\lfloor N/{\epsilon}\right\rfloor^{10}}$. So $C_0$ is much larger than $N,1/\epsilon$. We will also use the following much larger constant: $C_1=2^{{C_0}^{10}}$. Throughout the proof $C$ will denote various large numbers that vary from line to line and that may depend on $N, \ \epsilon$ but not on $C_0$ and the dimension of $H,H'$. That $C$ may depend on $\epsilon$ is not a problem since implicit constant of (\ref{eqi.3}) also depends on $(p,q)$. So we have $C<N<C_0<C_1$ and each of these quantities dominates any reasonable quantity arising from quantities smaller than it. We shall use $A \lesssim B$ to mean $A \leq CB$, and $A \approx B$ for $A \lesssim B$ and $B \lesssim A$.
 
 Let's examine red and blue waves more closely. Clearly blue waves are time reversals of red waves. Both red and blue waves solve free wave equation, but propagate along different sets of characteristics.   Due to compact Fourier supports waves are smooth and bounded. The following machinery will help us understand propagation of waves. Since the wave equation is a constant coefficient linear PDE, using the Fourier transform for space variables we can obtain an evolution operator defined by the Fourier transform that given the initial value of a wave, allows one to calculate it at another time. For our red waves this operator takes the following form. 
Let $a(\xi)$ be a fixed bump function supported on $\underline{\Sigma}$ which is equal to 1 on the spatial projection of $\Sigma^{R}$ and $\Sigma^{B}$. Then the evolution operator is $U(t)$  defined by
\[ \widehat{U(t)f}(\xi):= a(\xi)e^{2\pi it|\xi|}\widehat{f}(\xi). \] 
As this evolution operator is defined by multiplication in the frequency space, it  will translate to a convolution with a kernel $K_t$ when we inverse Fourier transform both sides. This convolution kernel is given  by
\[K_t(x)=\int a(\xi)e^{2\pi i(x\cdot \xi +t|\xi| )}d\xi.\] 
Thus we have the following equality
\begin{equation}\label{eq.fi}\phi(t)=U(t)\phi(0)=\phi(0)\ast K_t 
\end{equation}
for  all red waves , and all times  $t$. We want to have a decay estimate for our convolution kernel. Fixing a direction in space-time, and using non-stationary phase, we  can obtain decay estimates that depend on the distance of the point to  $C^{R}(0,0)$. More precisely 
\begin{equation}\label{eq.k}|K_t(x) |  \lesssim (1+dist((x,t),C^{R}(0,0)))^{-N^{10}.}
\end{equation}
 Using Young's inequality, and the estimate above we have $\| \phi(0) \|_{\infty} \lesssim E(\phi)^{1/2}$. Then by time translation invariance and time reversal symmetry we have
\begin{equation}\label{infty}
\| \phi \|_{\infty} \lesssim E(\phi)^{1/2}, \  \|\psi\|_{\infty} \lesssim E(\psi)^{1/2}
\end{equation}
and hence
\begin{equation}\label{infty2}
\| \phi \psi \|_{\infty} \lesssim E(\phi)^{1/2}E(\psi)^{1/2}.
\end{equation}

We shall decompose our waves into smaller waves.  To  ensure that these are still waves of the same color, we need to define margin of a wave, and proceed for waves that obey a margin requirement, since our decompositions slightly enlarge Fourier support of a wave. So let $margin(\phi)$ denote the quantity 
\[ margin(\phi):=dist(supp(\hat{\phi}), \partial \Sigma^{red})\]  
We define the margin of a blue wave analogously.

  We are ready to localize to cubes of side-length $R$.
   
  \begin{definition}\label{d2.1}  
  For any $R\geq C_02^{C_1 /2}$, let A(R) be the best constant for which the inequality
\[\left\| \phi \psi\right\|_{L^q_t L^p_x (Q)} \leq A(R)E(\phi)^{1/2}E(\psi)^{1/2} \]
holds for all spacetime cubes Q of side-length R, red waves $\phi$ and  blue waves $\psi$  of margin
\begin{equation}\label{smr}
margin(\phi), margin (\psi) \geq 1/100- (1/R)^{1/N}.   
\end{equation}

\end{definition}
 
 It is clear that $A(R)$ is finite for each $R$, e.g using (\ref{infty}) we have the following crude bound 
\begin{equation}\label{pb}
\|\phi\psi\|_{L^q_tL^p_x(Q_R)} \lesssim R^C E(\phi)^{1/2}E(\psi)^{1/2}
\end{equation}
 which in particular shows
 \begin{equation}\label{pba}
 A(R) \lesssim R^C.
 \end{equation}  
 Moreover via a finite decomposition of space and frequency, and some  Lorentz transforms we see that 
 \begin{equation}\label{eq.r'}
 \|\phi\psi\|_{L^q_tL^p_x(Q_R)} \lesssim A(R')E(\phi)^{1/2}E(\psi)^{1/2}
 \end{equation}
 for any cube $Q_R$ of side-length $R$, any $R \approx R'$, and any red and blue waves $\phi, \psi.$
   Hence it is enough to prove that
 \begin{equation}\label{eq2.2} 
 A(R) \lesssim 2^{CC_1}
 \end{equation}
 uniformly for all $R\geq C_02^{C_1}$. As $R$ gets larger the margin requirement becomes more strict, thus we will also need the following variant of $A(R)$
 \[\overline{A}(R):= \sup_{C_02^{C_1/2}\leq r\leq R} A(r).\]
 
 Now let's see some easy estimates that we get when we localize to a cube. Let $Q_R$ be a cube of side-length R, and $\phi,\psi$ arbitrary red and blue waves. 
\begin{equation}\label{ce} \|\phi\|_{L^{2}(Q_R)} \lesssim R^{1/2}E(\phi)^{1/2}, \\ \|\psi\|_{L^2(Q_R)}\lesssim R^{1/2}E(\psi)^{1/2} . \end{equation}
To obtain these just integrate energy along the life-span of the cube $Q_R$. Using H\"{o}lder these two gives
\begin{equation}\label{eq3.12}
\|\phi\psi \|_{L^1(Q_R)} \lesssim R E(\phi)^{1/2}E(\psi)^{1/2}.
\end{equation}

 Rest of the paper is organized as follows: in section \ref{s3} we will give the definitions and estimates necessary for the rest of the paper. Some of the material employed for this purpose is proved in \cite{T1}, and works without any important change in our case. Then in section \ref{s4} we will localize waves to subcubes, and prove our  key proposition which is our main tool for the rest of the proof. In section \ref{s5} the concept of energy concentration will be made precise, machinery to deal with the concentrated and the non-concentrated cases developed, and Theorem \ref{t1.2}  proved.

 %========================================================================================================================================================================================================================================================================================================================================================================================================

\section{ Necessary definitions and estimates}\label{s3}

 We first give definitions that will make precise localization to sub-cubes. Let $Q$ be a cube of side-length $R$. Let $K_j(Q)$ denote collection of all sub-cubes we obtain when we partition $Q$ into cubes of sidelength $2^{-j}R$. Of course, there are $2^{(n+1)j}$ sub-cubes in this collection. We define a $red \ wave \ table \  \phi \ of \ depth \  j$ on $Q$ to be any red wave with the vector form
  \[\phi:=(\phi^{(q)})_{q\in K_j(Q), }\]
  where components $\phi$  are also red waves. Note that by the definition of energy 

\begin{equation}\label{eq3.1}
E(\phi)= \sum_{q \in K_j(Q)} E(\phi^{(q)}).
\end{equation}
  For a red wave table $\phi$ of depth $j$ on $Q$ we define the $j-quilt \ [\phi]_{j}$ of $\phi$ to be the function
  \[[\phi]_{j}:= \sum_{q\in K_{j}(Q)} |\phi^{(q)}| \chi_q.\]
Hence we have the pointwise estimates
\begin{equation}\label{eq3.2}
|\phi^{(q)}| \chi_q \leq [\phi]_{j}\leq |\phi| \chi_{Q}
\end{equation}
   for all $q \in K_{j}(Q)$. 
   
   We define $(c,k) \ interior \ I^{c,k}(Q)$ of $Q$ for $k$ a nonnegative integer and $0<c\ll 1$ by 
   \[I^{c,k}(Q):= \bigcup_{q\in K_k(Q)} (1-c)q. \]

Next we give some definitions concerning localization of energy to a disk. Let $\eta_0$ denote a fixed non-negative Schwarz function on $\textbf{R}^n$ with total mass 1 whose Fourier transform is supported on the unit disk. For any $r>0$ let $\eta_r(x):=r^{-n}\eta_0(x/r)$. 

Let $D=D(x_D,t_D;r)$ be any disk. Then define the operator $P_D$ as follows: for any red wave $\phi$ let
\[ P_D\phi(t_D):=(\chi_D \ast \eta_{r^{1-1/N}})\phi(t_D). \]
when $t=t_D$ and 
\[P_D\phi(t)=U(t-t_D)P_D\phi(t_D)\] 
at other times $t$. It is easy to see that $P_D\phi$ is a red wave. To extend this definition to blue waves we use time reversal operator $\textbf{T}$:
\[ P_D\textbf{T}\phi:=\textbf{T}P_D\phi, \]
where 
\[\textbf{T}\phi(x,t):=\phi(x,-t).\]
Next lemma  shows that $P_D$ localize a wave to the disk $D$, and $(1-P_D)$ localizes to $D^{ext}$.

 \begin{lemma}[Lemma 10.2 in \cite{T1}]\label{l3.1}
  Let  $r\geq C_0$, and $D=D(x_D,t_D;r)$ be a disk. Let $\phi$ be a red wave such that  $margin(\phi)\geq C_0r^{-1+1/N}.$ Then $P_D\phi$ is a red wave  which satisfies the following margin and energy estimates:
\begin{equation}\label{eq3.3}
margin(P_D\phi)\geq margin(\phi)-Cr^{-1+1/N}
\end{equation}
 \begin{equation}\label{eq3.4}
\| \widetilde{\chi}_D^{-N} P_D \phi \|_{L^2(D^{ext}_+)}\lesssim r^{-N^2}E(\phi)^{1/2}
\end{equation}
 \begin{equation}\label{eq3.5}
\|(1-P_D)\phi \|_{L^2(D_-)} \lesssim r^{-N}E(\phi)^{1/2} 
\end{equation}
\begin{equation}\label{eq3.6}
E(P_D\phi) \leq \|\phi \|^2_{L^2(D_+)}+Cr^{-N}E(\phi)
\end{equation}
\begin{equation}\label{eq3.7}
E((1-P_D)\phi) \leq \|\phi \|^2_{L^2(D_-^{ext})}+Cr^{-N}E(\phi) 
\end{equation}
\begin{equation}\label{eq3.8}
E(P_D\phi),E((1-P_D)\phi) \leq E(\phi)
\end{equation}
where $D_-,D_+$ are the disks $D_{\pm}:=D(x_D,t_D;r(1 \pm r^{-1/2N} ))$.
 
 \end{lemma}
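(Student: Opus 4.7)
The plan is to exploit two features of the multiplier $m_D := \chi_D \ast \eta_{r^{1-1/N}}$ appearing in the definition of $P_D$ at time $t_D$: its spatial Fourier transform is supported in the ball of radius $r^{-1+1/N}$ (since $\widehat{\eta}_{r^{1-1/N}}(\xi)=\widehat{\eta}_0(r^{1-1/N}\xi)$ and $\widehat{\eta}_0$ is supported in the unit ball), and by the Schwartz decay of $\eta_0$ it is an extremely smooth approximation of $\chi_D$. Both the margin estimate (\ref{eq3.3}) and the claim that $P_D\phi$ is a red wave are immediate from the first feature on the Fourier side: $\widehat{P_D\phi(t_D)}=\widehat{m}_D\ast\widehat{\phi(t_D)}$ has spatial Fourier support in the $r^{-1+1/N}$-neighborhood of that of $\phi(t_D)$, and this property is preserved by the evolution $U(t-t_D)$, which acts as the spatial Fourier multiplier $a(\xi)e^{2\pi i(t-t_D)|\xi|}$. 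Since $C_0$ dominates any $C$, the hypothesis $\mathrm{margin}(\phi)\geq C_0 r^{-1+1/N}$ forces the enlarged support to remain strictly inside $\Sigma^R$, giving both (\ref{eq3.3}) and the red-wave conclusion.

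For the localization estimates (\ref{eq3.4}) and (\ref{eq3.5}), the crucial input is a scale separation: the shells $D\setminus D_-$ and $D_+\setminus D$ have thickness $r\cdot r^{-1/2N}=r^{1-1/2N}$, while the mollifier $\eta_{r^{1-1/N}}$ has concentration scale $r^{1-1/N}$. The dimensionless ratio $r^{1/2N}$ is converted into an arbitrarily negative power of $r$ via high-order Schwartz moments of $\eta_0$. Explicitly, for $x\in D_+^{\mathrm{ext}}$ every $y\in D$ satisfies $|x-y|\geq r^{1-1/2N}$, hence
\[
|m_D(x)|\leq |D|\cdot r^{-n(1-1/N)}(1+r^{1/2N})^{-k}
\]
for any $k$; choosing $k$ large (depending on $N,n$) dominates both the factor $|D|\sim r^n$ and the polynomial weight $\widetilde{\chi}_D^{-N}$. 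Multiplying by $\|\phi(t_D)\|_{L^2}\leq E(\phi)^{1/2}$ then yields (\ref{eq3.4}). For (\ref{eq3.5}) one writes $1-m_D(x)=\int_{D^c}\eta_{r^{1-1/N}}(x-y)\,dy$; for $x\in D_-$ the integration region lies at distance at least $r^{1-1/2N}$ from $x$, so the same Schwartz tail estimate gives $\|1-m_D\|_{L^\infty(D_-)}\lesssim r^{-N}$ directly.

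The energy bounds (\ref{eq3.6})--(\ref{eq3.8}) follow from the pointwise estimate $0\leq m_D\leq 1$, which is immediate from $\chi_D\in[0,1]$, $\eta_0\geq 0$, and $\int\eta_0=1$. This at once gives (\ref{eq3.8}), since $|P_D\phi(t_D)|\leq|\phi(t_D)|$ and likewise for $(1-P_D)\phi$. For (\ref{eq3.6}) split $E(P_D\phi)=\|m_D\phi(t_D)\|_{L^2}^2$ over $D_+$ and $D_+^{\mathrm{ext}}$: the first piece is bounded by $\|\phi\|_{L^2(D_+)}^2$ and the second by $\|m_D\|_{L^\infty(D_+^{\mathrm{ext}})}^2 E(\phi)\lesssim r^{-N}E(\phi)$ via the decay estimate above. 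The proof of (\ref{eq3.7}) is symmetric, splitting over $D_-$ and $D_-^{\mathrm{ext}}$ and using (\ref{eq3.5}). The only step requiring care is choosing the Schwartz moment $k$ large enough to absorb both the volume factor $r^n$ and the polynomial weight $\widetilde{\chi}_D^{-N}$ while still producing the rate $r^{-N^2}$ in (\ref{eq3.4}); there is no deeper geometric or analytic obstacle.
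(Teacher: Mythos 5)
Your proof takes essentially the same route as the paper: the paper's three displayed pointwise bounds on $m_D=\chi_D\ast\eta_{r^{1-1/N}}$ (namely $0\le m_D\le 1$ everywhere, $\widetilde\chi_D^{-N}m_D\lesssim r^{-N^2}$ on $D_+^{\mathrm{ext}}$, and $1-m_D\lesssim r^{-N}$ on $D_-$) are exactly what you derive from the Fourier-support and Schwartz-tail computations, and the splits over $D_\pm$ and their complements that you use for (\ref{eq3.6})--(\ref{eq3.7}) are the paper's implicit step. One small imprecision worth fixing: the uniform bound $|m_D(x)|\le |D|\,r^{-n(1-1/N)}(1+r^{1/2N})^{-k}$ on $D_+^{\mathrm{ext}}$ is constant in $x$ and so by itself cannot absorb the unbounded weight $\widetilde\chi_D^{-N}(x)\sim(1+|x-x_D|/r)^N$; what you actually need (and clearly have in mind) is the pointwise decay $|m_D(x)|\lesssim r^{n/N}\bigl(1+\mathrm{dist}(x,D)/r^{1-1/N}\bigr)^{-k}$, which after multiplying by the weight and using $\mathrm{dist}(x,D)\ge r^{1-1/2N}$ gives the claimed $r^{-N^2}$ uniformly for $k$ large.
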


\begin{proof}
Margin estimate is clear. Notice that 
\[0 \leq \chi_D \ast \eta_{r^{1-1/N}}(x) \leq 1 \]
for all $x \in \textbf{R}^n$, thus we have (\ref{eq3.8}). For $x \in D^{ext}_+$
\[\widetilde{\chi}^{-N}_D(x)(\chi_D \ast \eta_{r^{1-1/N}})(x) \lesssim r^{-N^2} \]
thus follows (\ref{eq3.4}),(\ref{eq3.6}). For $x \in D_-$
\[\chi_D \ast \eta_{r^{1-1/N}}(x) \geq 1-Cr^{-N}\]
and hence we have (\ref{eq3.5}),(\ref{eq3.7}).

\end{proof}
 Analogue of this for blue waves is of course legitimate by time reversal. After looking at localization properties of $P_D$ at time $t_D$ we now explore localization of it in space-time.
 
 \begin{lemma}[See Lemma 10.3 in \cite{T1}]\label{l3.2} Let D be a disk of radius $r\geq 2^{C_0}$, and  $\phi$ be a red wave with $margin(\phi) \geq C_0r^{-1+1/N}$. Let $1\leq q<p\leq2$ and $r \lesssim R$ . Then if $\psi$ is an arbitrary blue wave, we have the finite speed of propagation law                           
\begin{equation}\label{eq3.9}
\|((1-P_D)\phi)\psi\|_{L_t^qL_x^p(Q(x_D,t_D,C^{-1}r))} \lesssim r^{C-N}E(\phi)^{1/2}E(\psi)^{1/2}
 \end{equation}
 \textit{and the Huygens' principle}
\begin{equation}\label{eq3.10}
\| (P_D\phi)\psi \|_{L_t^q L^p_x(Q(x_0,t_0;R)\setminus C^{R}(x_D,t_D;Cr+R^{1/N}))} \lesssim R^{C-N}E(\phi)^{1/2}E(\psi)^{1/2}.  
 \end{equation}
\textit{where $x_0\in \textbf{R}^{n}$ is arbitrary and $|t_0-t_D| \leq C_0R$.}
\end{lemma}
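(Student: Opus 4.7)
My plan is to derive both estimates from pointwise bounds on the relevant red wave combined with the $L^\infty$ control of $\psi$ from (\ref{infty}). On any spacetime cube of side $\ell\leq R$ one has the trivial embedding $\|F\|_{L_t^q L_x^p}\leq \ell^{n/p+1/q}\|F\|_\infty\lesssim R^{n+1}\|F\|_\infty$, so it suffices to establish the pointwise bounds $|(1-P_D)\phi(x,t)|\lesssim r^{-N^9}E(\phi)^{1/2}$ on $Q(x_D,t_D;C^{-1}r)$ and $|P_D\phi(x,t)|\lesssim R^{-N^9}E(\phi)^{1/2}$ on $Q(x_0,t_0;R)\setminus C^{R}(x_D,t_D;Cr+R^{1/N})$; multiplication by $\|\psi\|_\infty\lesssim E(\psi)^{1/2}$ and by the volume factor will then yield the desired $r^{C-N}$ and $R^{C-N}$ decays.

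For (\ref{eq3.9}) I apply the evolution formula (\ref{eq.fi}) to the red wave $(1-P_D)\phi$ at time $t_D$ to write
\[
(1-P_D)\phi(x,t)=\int (1-P_D)\phi(y,t_D)\,K_{t-t_D}(x-y)\,dy,
\]
and split over $y\in D_-$ and $y\in D_-^{ext}$. On $D_-$, Cauchy--Schwarz combined with (\ref{eq3.5}) and the Plancherel bound $\|K_{t-t_D}\|_2=\|a\|_2\lesssim 1$ yields $r^{-N}E(\phi)^{1/2}$. For $y\in D_-^{ext}$ and $(x,t)\in Q(x_D,t_D;C^{-1}r)$ one checks $|x-y|\geq r(1-r^{-1/(2N)})-C^{-1}r/2\geq r/2$ while $|t-t_D|\leq r/(2C)$, so $(x-y,t-t_D)$ sits at distance $\gtrsim r$ from $C^{R}(0,0)$; by (\ref{eq.k}) the kernel has rapid decay on this region, and Cauchy--Schwarz against $\|(1-P_D)\phi(\cdot,t_D)\|_2\leq E(\phi)^{1/2}$ from (\ref{eq3.8}) closes this piece.

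For (\ref{eq3.10}) I make the analogous split for $P_D\phi$ over $D_+$ and $D_+^{ext}$. The key geometric observation is that $C^{R}(y,t_D)=(y-x_D)+C^{R}(x_D,t_D)$, so when $y\in D_+$ the translated cone $C^{R}(y,t_D)$ sits inside the $r(1+r^{-1/(2N)})$-neighborhood of $C^{R}(x_D,t_D)$. Therefore if $(x,t)\notin C^{R}(x_D,t_D;Cr+R^{1/N})$ with $C\geq 4$, then $\mathrm{dist}((x-y,t-t_D),C^{R}(0,0))\geq R^{1/N}$, so (\ref{eq.k}) yields $|K_{t-t_D}(x-y)|\lesssim R^{-N^9}$ uniformly on $D_+$; Cauchy--Schwarz against $\|P_D\phi(\cdot,t_D)\|_{L^2(D_+)}\leq E(\phi)^{1/2}$ handles this piece. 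For $y\in D_+^{ext}$ I pair the polynomial decay (\ref{eq3.4}) of $\widetilde{\chi}_D^{-N}P_D\phi(\cdot,t_D)$ against $\widetilde{\chi}_D^{N}K_{t-t_D}(x-\cdot)$; the $L^2$ norm in $y$ of the latter is still small because for $y$ with $|y-x_D|$ large, the cone-distance at $(x-y,t-t_D)$ is comparable to $|x-y|$ (since $|t-t_D|\leq C_0R$), so the $(1+\mathrm{dist})^{-N^{10}}$ decay of the kernel beats the polynomial weight.

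I expect the main technical obstacle to be the $D_+^{ext}$ piece, where the polynomial growth of $\widetilde{\chi}_D^N$ far from $D$ must be quantitatively beaten by the rapid decay of $K_{t-t_D}$ in the cone-distance uniformly over all $y$, including those arbitrarily far from the cube; the rest is careful geometric accounting of light-cone distances combined with standard Cauchy--Schwarz and H\"older.
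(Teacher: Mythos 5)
Your argument is essentially the paper's: both reduce the bilinear mixed-norm estimate to a pointwise bound on the localized red-wave factor obtained via the evolution formula (\ref{eq.fi}), the kernel decay (\ref{eq.k}), and the localization estimates (\ref{eq3.4})--(\ref{eq3.5}), and then multiply in the $\psi$-factor and a polynomial volume factor. The only cosmetic difference is that the paper places $\psi$ in $L^2$ via H\"older reduction to $p=q=2$ and (\ref{ce}), whereas you place $\psi$ in $L^\infty$ via (\ref{infty}); either loses only a power $R^C$ (or $r^C$) that the $r^{-N^{10}}$-type decay dominates.
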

 We will also need the analogue of this for blue waves. 

\begin{proof}
Using H\"{o}lder inequality, it is enough to prove this for $p=q=2$. To see (\ref{eq3.9}), observe that
\[\|(1-P_D)\phi\|_{L^{\infty}(Q(x_D,t_D;C^{-1}r))} \lesssim r^{C-N}E(\phi)^{1/2}\]
by  (\ref{eq.fi}), (\ref{eq.k}), (\ref{eq3.5}). Then by (\ref{ce}) we get the desired result.
To prove (\ref{eq3.10}) we observe that
\[ \| P_D\phi \|_{L^{\infty}(Q(x_0,t_0;R) \setminus C^R(x_D,t_D;Cr+R^{1/N}))} \lesssim R^{C-N}E(\phi)^{1/2} \]
by (\ref{eq.fi}), (\ref{eq.k}), (\ref{eq3.4}). Then by (\ref{ce}) we obtain our result. Analogues for blue waves are obtained similarly without any loss. 
\end{proof}

Using transversality we can prove better $L^2$ estimates than (\ref{ce}). Next lemma which is proven in \cite{T1} shows this.

\begin{lemma}[Lemma 13.1 in \cite{T1}]\label{l3.3} Let $\phi$ be a red wave. Then for any $(x_0,t_0)\in \textbf{R}^{n+1}$ and $R \gtrsim 1$ we have
\[\|\phi\|_{L^2(C^{B}(x_0,t_0;R))} \lesssim R^{1/2}E(\phi)^{1/2}.\]
\end{lemma}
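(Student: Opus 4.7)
I plan to reduce the estimate to a bound on the $L^2$ norm of $\phi$ on translates of the (unthickened) blue cone, and then integrate over the thickening parameter. By translation invariance I take $(x_0,t_0)=(0,0)$ and focus on the future half $C^B_+ := C^B(0,0;R)\cap\{t>0\}$; the past half is analogous via the spatial substitution $x\mapsto -x$.

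Points in $C^B_+$ satisfy $|\,|x|-t\,|\lesssim R$ and $x/|x|\in\underline{\Sigma}\cap S^{n-1}$. Using the slab parameter $s:=t-|x|$ (so $|s|\lesssim R$), the substitution $(x,t)=(y,|y|+s)$ has Jacobian $1$, and Fubini gives
\begin{equation*}
\|\phi\|_{L^2(C^B_+)}^2 \lesssim \int_{|s|\lesssim R}\int_{y/|y|\in\underline{\Sigma}} |\phi(y,|y|+s)|^2\, dy\, ds.
\end{equation*}
The plan is to establish the uniform-in-$s$ bound $\int_{y/|y|\in\underline{\Sigma}} |\phi(y,|y|+s)|^2\, dy\lesssim E(\phi)$, which upon integration over $s\in[-CR,CR]$ yields the desired $R\,E(\phi)$.

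For the inner estimate I would write $\phi(y,|y|+s)=\int \hat f(\xi)e^{2\pi is|\xi|}e^{2\pi iy\cdot(\xi+|\xi|\hat y)}\, d\xi$ with $\hat y:=y/|y|$, and for each fixed $\hat y$ perform the change of variables $\xi\mapsto\eta:=\xi+|\xi|\hat y$. The Jacobian is $1+\hat y\cdot(\xi/|\xi|)$, bounded above and below by absolute positive constants: $\hat y$ lies in $\underline{\Sigma}\cap S^{n-1}$ (angular radius $\pi/4$ around $e_1$) and $\xi/|\xi|$ lies in the smaller sector coming from $\operatorname{supp}\hat f$ (angular radius $\pi/8$), so their mutual angular distance is at most $3\pi/8<\pi/2$ and $\hat y\cdot(\xi/|\xi|)\geq\cos(3\pi/8)>0$. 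The map is thus a smooth diffeomorphism of $\operatorname{supp}\hat f$ with Jacobian of size $1$, and one arrives at $\phi(y,|y|+s)=\int\tilde f_s(\eta,\hat y)\,e^{2\pi iy\cdot\eta}\,d\eta$ with $|\tilde f_s|\sim|\hat f|$. This is precisely the transversality that the lemma encodes.

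The main obstacle will be the residual $\hat y$-dependence of $\tilde f_s$, which prevents a direct application of Plancherel in $y$. To overcome this I would decompose $\underline{\Sigma}\cap S^{n-1}$ into a bounded number of small angular caps and use an associated smooth partition of unity in $\hat y$. On each cap $\hat y$ is essentially constant, so Plancherel applies and yields a contribution $\lesssim\int|\tilde f_s(\eta,\hat y_\kappa)|^2\,d\eta\lesssim E(\phi)$; summing the $O(1)$ caps gives the uniform estimate and hence the lemma. An alternative route would bypass Plancherel altogether by expanding $\int|\phi(y,|y|+s)|^2\,dy$ as a double integral in $\xi,\xi'$ and bounding the Schur norm of the resulting oscillatory kernel via the same transversality (the phase involves $\Phi_\omega(\xi)-\Phi_\omega(\xi')$ with $\Phi_\omega(\xi)=\omega\cdot\xi+|\xi|$, which has $|\nabla_\omega|$ non-degenerate thanks to the angular restriction).
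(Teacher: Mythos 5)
The paper imports this lemma from \cite{T1} (Lemma 13.1) without reproducing a proof, so I assess your argument on its own terms. Your reduction is sound through the transversality computation: the foliation $s = t - |x|$ has unit Jacobian and reduces matters to the slab trace estimate $\int_{\hat y\in\underline\Sigma}|\phi(y,|y|+s)|^2\,dy\lesssim E(\phi)$ uniform in $s$, and the observation that $\xi\mapsto\eta=\xi+|\xi|\hat y$ has Jacobian $1+\hat y\cdot\hat\xi\ge 1+\cos(3\pi/8)>0$ on the relevant supports is exactly the right geometric input and is correctly derived.

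The gap is in the endgame, and in both routes you offer. In the cap route, decomposing $\hat y$ into $O(1)$ angular caps does not freeze $\hat y$: inside a cap $\hat y$ still varies by an amount comparable to the cap width, and $\phi(y,|y|+s)=\int\tilde f_s(\eta,\hat y)\,e^{2\pi i y\cdot\eta}\,d\eta$ remains an oscillatory integral whose amplitude depends on the integration variable $y$ through $\hat y$, so Plancherel simply cannot be applied; replacing $\hat y$ by the cap centre $\hat y_\kappa$ produces an error of the shape $\hat f(\Xi(\eta,\hat y))-\hat f(\Xi(\eta,\hat y_\kappa))$, which is not controllable for a generic $L^2$ amplitude $\hat f$ since no modulus of continuity is available. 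In the Schur route, the kernel $K(\xi,\xi')=\int_{\hat y\in\underline\Sigma}e^{2\pi i(y\cdot(\xi-\xi')+|y|(|\xi|-|\xi'|))}\,dy$ is produced by an unbounded $y$-integral; after regularization the radial integration against $\rho^{n-1}$ gives a contribution of order $|\Phi_\omega(\xi)-\Phi_\omega(\xi')|^{-n}$, and near the diagonal $\int|K(\xi,\xi')|\,d\xi'$ over the compact $n$-dimensional spatial Fourier support is log-divergent; the mere nonvanishing of $\nabla_\omega(\Phi_\omega(\xi)-\Phi_\omega(\xi'))=\xi-\xi'$ is far too weak to save this. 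A way to actually close the argument, staying close to your setup, is the dual $TT^*$ computation on the spatial side: the kernel in the $y$ variables is $K_{|y'|-|y|}(y'-y)$ with $K_t$ the propagator of (\ref{eq.k}), and the angular separation between $\underline\Sigma$ and the red sector shows that for $\hat y,\hat{y'}\in\underline\Sigma$ the point $(y'-y,|y'|-|y|)$ lies at distance $\gtrsim|y-y'|$ from $C^R(0,0)$; (\ref{eq.k}) then gives rapid off-diagonal decay and an absolutely convergent Schur norm, yielding the uniform slab estimate and hence the lemma.
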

By time reversal we of course have an analogue of this for blue waves. Hence we have the following corollary.

\begin{corollary}[Corollary 13.2 in \cite{T1}]\label{c3.1} Let $\phi$ be a  red wave and $\psi $ a blue wave. Let $R > r \gg 1$, $(x_0,t_0)\in \textbf{R}^{n+1}$, and $Q_R$ be any cube of side-length R. Then 
\[\|\phi \psi\|_{L^{1}(C^{P}(x_0,t_0;r)\cap Q_R)} \lesssim r^{1/2}R^{1/2}E(\phi)^{1/2}E(\psi)^{1/2}.\]
 
\end{corollary}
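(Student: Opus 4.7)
The plan is to reduce the $L^1$ estimate over the double-cone neighborhood to two separate estimates on $C^R(x_0,t_0;r)\cap Q_R$ and $C^B(x_0,t_0;r)\cap Q_R$ via the triangle inequality, and then on each piece use a Cauchy--Schwarz split that pairs Lemma \ref{l3.3} (which exploits transversality) with the basic cube energy bound \eqref{ce}.

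More precisely, on $C^R(x_0,t_0;r)\cap Q_R$ I would write
\[
\|\phi\psi\|_{L^1(C^R(x_0,t_0;r)\cap Q_R)} \leq \|\phi\|_{L^2(Q_R)}\,\|\psi\|_{L^2(C^R(x_0,t_0;r))}.
\]
The first factor is bounded by $R^{1/2}E(\phi)^{1/2}$ using \eqref{ce}. For the second factor, $\psi$ is a blue wave and $C^R(x_0,t_0;r)$ is the $r$-neighborhood of a \emph{red} cone, so this is exactly the transverse configuration of Lemma \ref{l3.3} after applying the time reversal operator $\mathbf{T}$; we thereby get $\|\psi\|_{L^2(C^R(x_0,t_0;r))}\lesssim r^{1/2}E(\psi)^{1/2}$. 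Multiplying gives the desired bound $r^{1/2}R^{1/2}E(\phi)^{1/2}E(\psi)^{1/2}$ on this piece.

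The estimate on $C^B(x_0,t_0;r)\cap Q_R$ is completely symmetric: Cauchy--Schwarz with the roles swapped, using \eqref{ce} on $\psi$ over $Q_R$ and Lemma \ref{l3.3} directly on the red wave $\phi$ over the blue cone neighborhood. Finally, since
\[
C^P(x_0,t_0;r)\cap Q_R \subseteq \bigl(C^R(x_0,t_0;r)\cap Q_R\bigr)\cup\bigl(C^B(x_0,t_0;r)\cap Q_R\bigr),
\]
adding the two contributions yields the corollary with an absorbed constant.

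There is essentially no obstacle here: the corollary is a direct consequence of the previous lemma once one notices the cross-color pairing (red wave measured over the blue cone neighborhood, and vice versa) that makes Lemma \ref{l3.3} applicable. The only thing to watch is the use of time reversal so that the blue wave can be estimated on the red cone neighborhood, and the observation that $R > r$ is only needed so that the cube energy bound $R^{1/2}E^{1/2}$ is the dominant one; it plays no deeper role.
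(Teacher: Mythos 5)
Your proof is correct and is essentially the same Cauchy--Schwarz cross-color argument used in Tao's Corollary 13.2, which the paper cites without reproducing: split $C^P$ into $C^R$ and $C^B$, and on each piece pair the cube energy bound \eqref{ce} for the wave of the matching color with Lemma \ref{l3.3} (via time reversal where needed) for the wave of the opposite color. The only cosmetic point is that $R>r$ plays no mechanical role in the estimate at all --- the product $r^{1/2}R^{1/2}$ comes out regardless --- it merely records the regime in which the corollary is invoked.
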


Finally we give a lemma that will be used in section \ref{s5}.

\begin{lemma}\label{lx} Let $R>0$, $0<c \leq 2^{-C}$, and $Q_R$ be a cube of side-length $R$.  Let $F$ be any function essentially bounded on $C_0Q_R$.   Then there exists a cube $Q$ of side-length $CR$ contained in $C^2Q_R$ such that
\[\|F\|_{L^q_tL^p_x(Q_R)}\leq (1+Cc)\|F\|_{L^q_tL^p_x(I^{c,C_0}(Q))}\]
\end{lemma}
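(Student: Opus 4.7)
The plan is to produce $Q$ by a translation-and-pigeonhole argument. Fix a reference cube $Q_*\supset Q_R$ concentric with $Q_R$ of side-length $CR$ (with $C$ large enough that $Q_*\subset C^2Q_R$), and consider the family of translates $Q(y):=Q_*+y$ with $y$ uniform in the small cube $[-L/2,L/2]^{n+1}$, where $L:=CR\,2^{-C_0}$ is the common side-length of the sub-cubes in $K_{C_0}(Q(y))$. Every such $Q(y)$ lies in $C^2Q_R$ and still contains $Q_R$.

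The key structural observation is that since each $Q(y)$ is a product of intervals and shrinking by a factor $(1-c)$ around the center commutes with Cartesian products,
\begin{equation*}
I^{c,C_0}(Q(y))=I_x(y_x)\times I_t(y_t),
\end{equation*}
where $y=(y_x,y_t)$ and $I_x(y_x)\subset\textbf{R}^{n}$, $I_t(y_t)\subset\textbf{R}$ are the one-dimensional shrunken grids obtained by removing, from each sub-interval of the length-$L/2^{C_0}$ grid, a centered interval of relative width $1-c$. Writing $Q_R=Q_{R,x}\times Q_{R,t}$ one then has the clean splitting
\begin{equation*}
Q_R\setminus I^{c,C_0}(Q(y))=\bigl[Q_{R,x}\times(Q_{R,t}\setminus I_t(y_t))\bigr]\cup\bigl[(Q_{R,x}\setminus I_x(y_x))\times Q_{R,t}\bigr].
\end{equation*}
The first (``temporal'') piece contributes $\int_{Q_{R,t}\setminus I_t(y_t)}\|F(\cdot,t)\|_{L^p_x(Q_{R,x})}^q\,dt$ to the $q$-th power of the mixed norm; averaging over $y_t$, each $t$ is lost with probability $c$, so its expectation is $\leq c\|F\|_{L^q_tL^p_x(Q_R)}^q$. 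For the second (``spatial'') piece, at any fixed $t$ one has $\|F(\cdot,t)\|_{L^p(Q_{R,x}\setminus I_x(y_x))}^p\leq\sum_{i=1}^{n}\|F(\cdot,t)\|_{L^p(S_i(y_x))}^p$ where $S_i$ is the $i$-th coordinate strip; averaging each component of $y_x$ independently gives $E_{y_x}\|F(\cdot,t)\|_{L^p(Q_{R,x}\setminus I_x(y_x))}^p\leq nc\,\|F(\cdot,t)\|_{L^p(Q_{R,x})}^p$, and then Jensen's inequality with exponent $q/p\leq 1$ upgrades this to $E_{y_x}\|F(\cdot,t)\|_{L^p(Q_{R,x}\setminus I_x(y_x))}^q\lesssim c^{q/p}\|F(\cdot,t)\|_{L^p(Q_{R,x})}^q$. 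Integrating in $t$ and combining both pieces gives $E_y\|F\|_{L^q_tL^p_x(Q_R\setminus I^{c,C_0}(Q(y)))}^q\lesssim c^{q/p}\|F\|_{L^q_tL^p_x(Q_R)}^q$, and pigeonholing furnishes a $y$ with
\begin{equation*}
\|F\|_{L^q_tL^p_x(Q_R\setminus I^{c,C_0}(Q(y)))}\lesssim c^{1/p}\,\|F\|_{L^q_tL^p_x(Q_R)}.
\end{equation*}

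Applying Minkowski in $L^q_tL^p_x$ to the splitting $F\chi_{Q_R}=F\chi_{Q_R\cap I^{c,C_0}(Q(y))}+F\chi_{Q_R\setminus I^{c,C_0}(Q(y))}$ and absorbing the small error (legitimate because $c\leq 2^{-C}$ forces $Cc^{1/p}\leq\tfrac{1}{2}$) produces $\|F\|_{L^q_tL^p_x(Q_R)}\leq(1+Cc^{1/p})\|F\|_{L^q_tL^p_x(I^{c,C_0}(Q(y)))}$, which is the claimed estimate up to a harmless reparameterisation of $c$. The hypothesis that $F$ is essentially bounded on $C_0Q_R$ is invoked only to guarantee that all the $L^p$- and $L^q$-integrals above are finite, so that Fubini, Jensen, and the averaging identity are rigorously applicable.

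The main obstacle is the mixed-norm bookkeeping at the spatial step: an $L^p$-norm cannot be averaged linearly for $p>1$, so Jensen must be applied at the correct exponent and the splitting of the bad set must genuinely respect the product structure of the cube partition; this is what forces one to average the temporal and spatial shifts as a single pair $(y_x,y_t)$ and to exploit the tensor factorisation of $I^{c,C_0}(Q(y))$. Once that decomposition is in place, the rest is a routine combination of Fubini, Jensen, and pigeonholing.
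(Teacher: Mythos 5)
Your proof is correct in essence but follows a genuinely different route from the paper. The paper first establishes the $L^1$ case: it averages the choice of center $(x_0,t_0)$ over all of $Q_R$, uses Fubini to exchange the integrals, and observes that each fixed point of $Q_R$ belongs to $I^{c,C_0}(Q(x_0,t_0;CR))$ for at least a $(1-Cc)$-fraction of centers, whence pigeonholing yields a good $Q$ with $\|G\|_{L^1(Q_R)}\leq(1+Cc)\|G\|_{L^1(I^{c,C_0}(Q))}$. It then lifts this $L^1$ inequality to $L^q_tL^p_x$ by duality: normalizing $\|F\|_{L^q_tL^p_x(Q_R)}=1$, it picks a H\"older extremizer $A\in L^{q'}_tL^{p'}_x(Q_R)$ with $\int FA=1$, applies the $L^1$ result to $G=FA$, and closes with H\"older on $I^{c,C_0}(Q)$. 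You instead average the mixed norm directly; this forces you to split the bad set $Q_R\setminus I^{c,C_0}(Q(y))$ into a temporal piece and a spatial piece by invoking the Cartesian-product structure of $I^{c,C_0}(Q(y))$, and to push the average through the inner $L^p_x$-norm via Jensen with exponent $q/p<1$. The duality step in the paper sidesteps all of this bookkeeping and, crucially, produces the constant $(1+Cc)$ exactly, whereas your Jensen step degrades it to $(1+Cc^{1/p})$. That loss is indeed harmless downstream, because the lemma is only ever invoked with $c$ set to $(1/R)^{1/N}$ or $2^{-C_1}$, both of which make $c^{1/p}$ still negligible against $C_0^{-C}$ in Proposition \ref{p5.3}; but strictly speaking your argument does not yield the inequality in the form stated, and you should record that the power of $c$ has changed rather than calling it a mere reparameterisation. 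In short: same pigeonhole-over-translates idea, but the paper's $L^1$-plus-duality shortcut is cleaner and lossless, while your direct mixed-norm average is more self-contained at the price of a slightly weaker constant.
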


\begin{proof}
We first prove this for $L^1$ then by duality arguments extend it to $L^q_tL^p_x$. Let $G $ be integrable on $C_0Q_R$. By pigeonhole principle it suffices to prove
\[\|G\|_{L^1(Q_R)} \leq \frac{1}{|Q_R|}\int_{Q_R}(1+Cc)\|G\|_{L^1(I^{c,C_0}(Q(x_0,t_0;CR))\cap Q_R)} dx_0dt_0.\]
 Then applying Fubini's theorem we have
\[
\int_{Q_R}\|G\|_{L^1(I^{c,C_0}(Q)\cap Q_R)} dx_0dt_0  
=  \int_{Q_R} |G(x,t)||I^{c,C_0}(Q)\cap Q_R|dxdt.
\]
But we have
\[ |Q(x_0,t_0;CR)\setminus I^{c,C_0}(Q(x_0,t_0;CR))| \lesssim c|Q(x_0,t_0;CR)| \]
hence
\[|Q_R|\leq (1+Cc)|I^{c,C_0}(Q(x_0,t_0;CR)) \cap Q_R|.\]
from which the result for $L^1$ follows. Now observe that it suffices to prove our lemma for  $\|F\|_{L^q_tL^p_x(Q_R)}=1$. We have, by duality, a function $A$ such that $\|A\|_{L^{q'}_tL^{p'}_x(Q_R)}=1$ and 
\[ \int_{Q_R}|F(x,t)|A(x,t)dxdt=1.\]
But notice that by our result for $L^1$ functions we have 
\begin{align*}
1=\big|\int_{Q_R}|F(x,t)|A(x,t)dxdt \big| &\leq \|FA\|_{L^1(Q_R)} \\
& \leq (1+Cc)\|FA\|_{L^1(I^{c,C_0}(Q))}.
\end{align*} 
Then our result follows from the H\"{o}lder inequality.   

\end{proof}
 %=====================================================================================================================================================================================================================================================================================================================================================================
 
 \section{The key proposition}\label{s4}

 In this section we will state and prove the key proposition that will be used in the next section. First we import a proposition from \cite{T1} on which we do not need to make any change. As stated there, we have an analogue of this for blue waves.
\begin{proposition}[Proposition 15.1 in \cite{T1}]\label{p4.1} Let $R \geq C_02^{C_1}$, $0<c \leq 2^{-C_0}$. Let $Q$ be a spacetime 
cube of side-length R. Let $\phi$ be a red wave such that $margin(\phi) \gtrsim R^{-1/2}$, and let $\psi$ be a blue wave. Then there exists a red wave table $\Phi=\Phi_c(\phi,\psi;Q)$ of depth $C_0$ on Q such that the following properties hold.
\begin{equation}\label{eq4.1}
margin(\Phi) \geq  margin(\phi)-CR^{-1/2}.  
\end{equation}
$ [\Phi]_{C_0} \ approximates \ \phi: $
\begin{equation}\label{eq4.2} \left\| (|\phi|-[\Phi]_{C_0}) \psi \right\|_{L^2(I^{c,C_0}(Q))} \lesssim c^{-C}R^{(1-n)/4}E(\phi)^{1/2}E(\psi)^{1/2} . 
\end{equation}
$Bessel \ inequality:$
\begin{equation}\label{eq4.3}
E(\Phi) \leq (1+Cc)E(\phi) .
\end{equation}
$Persistence \ of \ non-concentration: For \ any \ r \gtrsim R^{(1/2+1/N)} \ we \ have $ 
\begin{equation}\label{eq4.4}
E_{r(1-C_0r^{-1/2N}),C_0Q}(\Phi,\psi) \leq (1+Cc)E_{r,C_0Q}(\phi,\psi). 
\end{equation}
\end{proposition}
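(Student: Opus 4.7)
The plan is to construct $\Phi$ by a wave packet decomposition of $\phi$ at scale $R$, following the standard template of \cite{T1}. First I fix an angular partition of $\underline{\Sigma}$ into caps of radius $\sim R^{-1/2}$ and a spatial partition of $\textbf{R}^n$ at the center time $t_Q$ of $Q$ into disks of radius $\sim R^{1/2}$; this induces a decomposition $\phi = \sum_T \phi_T$ into wave packets $\phi_T$ indexed by tubes $T$ of length $\sim R$ and radius $\sim R^{1/2}$, where each $\phi_T$ is a red wave whose spatial $L^2$ mass at each time is essentially concentrated in $T$. Since the hypothesis $R \geq C_0 2^{C_1}$ forces $R^{1/2} \ll R/2^{C_0}$, nearly every tube is contained in the lifespan of a single sub-cube $q \in K_{C_0}(Q)$. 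I then choose a smooth partition of unity $\{\sigma_q\}_{q \in K_{C_0}(Q)}$ assigning each tube $T$ to the sub-cubes whose lifespan it intersects, with overlap concentrated within a $c$-fraction of sub-cube boundaries, and set $\phi^{(q)} := \sum_T \sigma_q(T) \phi_T$; these assemble into the desired red wave table $\Phi = (\phi^{(q)})_{q \in K_{C_0}(Q)}$.

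Verifying \eqref{eq4.1} and \eqref{eq4.3} is then routine: the margin loss of $CR^{-1/2}$ comes from the $R^{-1/2}$-wide angular caps used in the decomposition, and Bessel's inequality follows from the almost-orthogonality of $\{\phi_T\}$ in $L^2(t=t_Q)$ combined with $\sum_q \sigma_q^2 = 1 + O(c)$. For the persistence of non-concentration \eqref{eq4.4}, the point is that the wave-packet scale $R^{1/2}$ is much smaller than the concentration scale $r \gtrsim R^{1/2+1/N}$, so an $r(1-C_0 r^{-1/2N})$-disk capturing energy of $(\Phi,\psi)$ corresponds to an $r$-disk capturing energy of $(\phi,\psi)$, at the cost of the same $(1+Cc)$ loss from the partition of unity plus a small radius thickening to absorb wave-packet width.

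The main step, and the hardest part, is \eqref{eq4.2}. On the interior $(1-c)q$, only tubes whose trajectories pass through $q$ contribute non-negligibly to $\phi^{(q)}$, and for these $\sigma_q \approx 1$ while $\sigma_{q'} \approx 0$ for $q' \neq q$; hence $|\phi^{(q)}| \approx |\phi|$ on $(1-c)q$ up to Schwarz tails from wave packets straying across sub-cube boundaries. Pairing the error against $\psi$ in $L^2$ and applying the transversal estimate of Lemma \ref{l3.3} on the cone-neighborhoods swept out by those stray tubes is what produces the $R^{(1-n)/4}$ gain, which encodes the transversality between red and blue characteristics. The $c^{-C}$ factor absorbs the polynomial losses from restriction to the $(1-c)$-interior and from finite-order Schwarz bounds on the wave-packet tails. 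Since this construction is insensitive to the mixed-norm exponents $(p,q)$ of our theorem, the argument of Proposition~15.1 of \cite{T1} applies verbatim here.
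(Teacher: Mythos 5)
Your sketch misses the central mechanism of Tao's construction and contains a geometric error that makes the assignment of wave packets to sub-cubes ill-posed.

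First, the geometry. The wave packets $\phi_T$ live on tubes of cross-section $\sim R^{1/2}$ but length $\sim R$, so each tube spans the entire lifespan of $Q$ and therefore crosses $\sim 2^{C_0}$ of the sub-cubes in $K_{C_0}(Q)$ (one at each time-height), not one. Your statement that ``nearly every tube is contained in the lifespan of a single sub-cube'' confuses the $R^{1/2}$ width with the $R$ length; the comparison $R^{1/2}\ll 2^{-C_0}R$ only controls the spatial footprint at a fixed time, not the temporal extent. As a result the partition of unity $\{\sigma_q\}$ you describe cannot have $\sigma_q(T)\approx 1$ on the interior of $q$ and $\approx 0$ on all neighboring $q'$, since a given tube genuinely passes through many $q'$.

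Second, and more fundamentally, your construction never uses $\psi$. The notation $\Phi = \Phi_c(\phi,\psi;Q)$ is not decorative: the wave table in Tao's Proposition~15.1 is built by a selection/pigeonholing step in which each tube $T$ (which passes through many sub-cubes) is assigned to the \emph{one} sub-cube $q(T)$ selected according to where the interaction of $T$ with $\psi$ concentrates. This $\psi$-dependent assignment is precisely what makes the approximation estimate \eqref{eq4.2} provable: on a sub-cube $q'\neq q(T)$ the contribution of $\phi_T\psi$ is small because $q(T)$ was chosen extremally, and the transversal $L^2$ gain $R^{(1-n)/4}$ (via the Wolff/Tao local bilinear $L^2$ estimate, of which Lemma~\ref{l3.3} is one ingredient) is harvested only after this selection. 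A purely geometric assignment as you propose would leave a tube's full energy simultaneously present in many $\phi^{(q)}$ or, if split by a partition of unity, would fail to produce $[\Phi]_{C_0}\approx|\phi|$ on each $(1-c)q$ as required. The margin estimate \eqref{eq4.1} and the Bessel inequality \eqref{eq4.3} you describe in the right spirit, and \eqref{eq4.4} is indeed a scale-comparison as you say, but the heart of the proposition, \eqref{eq4.2}, cannot be recovered from your construction. Note that the present paper does not reprove this proposition; it is imported verbatim from \cite{T1}, and it is Tao's $\psi$-dependent tube selection, not a spatial partition of unity, that your reconstruction needs.
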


Now we state and prove our key proposition.
\begin{proposition}\label{p4.2} Let $R \geq C_0 2^{C_1},0<c \leq 2^{-C_0} $, and let $\phi, \psi$ be respectively red and blue waves  which obey the energy normalization and the relaxed margin requirement
\begin{equation}\label{rm}
margin(\phi), margin(\psi) \geq 1/100-2(1/R)^{1/N}.
\end{equation}
 Then for any cube $Q$ of side-length $CR$, we can find on $Q$ a red wave table $\Phi$ of   depth $C_0$  
 and a blue wave table $\Psi$ of depth $C_0$  such that the following properties hold. \\
We have the margin estimate
 \begin{equation}\label{eq4.5}
 margin(\Phi),margin(\Psi) \geq 1/100- 3(1/R)^{1/N}.
 \end{equation}
 We have the energy estimate
 \begin{equation}\label{eq4.6}
 E(\Phi),E(\Psi)\leq 1+Cc.      
 \end{equation}
The following inequality holds
\begin{equation}\label{eq4.7}
 \left\|\phi \psi \right\|_{L^q_t L^p_x(I^{c,C_0}(Q))} \leq \left\|  [\Phi]_{C_0}[\Psi]_{C_0}\right\|_{L^q_tL^p_x(I^{c,C_0}(Q))} + c^{-C} .  
 \end{equation}
 If $r>1$ then for any cone $C^{purple}(x_0,t_0;r)$ we have
\begin{equation}\label{eq4.8} 
\begin{aligned}
\left\|\phi \psi \right\|_{L^q_tL^p_x(I^{c,C_0}(Q)\cap C^{P}(x_0,t_0;r))} &\leq \left\|  [\Phi]_{C_0}[\Psi]_{C_0}\right\|_{L^q_tL^p_x(I^{c,C_0}(Q)\cap C^{P}(x_0,t_0;r))}\\ &+ c^{-C}(1+R/r)^{-\epsilon/4} .
\end{aligned}
\end{equation}
We have the persistence of non-concentration: for all $r \gtrsim R^{1/2+3/N}$
\begin{equation}\label{eq4.9}
   E_{r(1-C_0(r)^{-1/3N}), C_0Q}(\Phi,\Psi) \leq E_{r,C_0Q}(\phi,\psi)+Cc. 
\end{equation} 
 \end{proposition}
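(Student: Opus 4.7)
My plan is to build $\Phi$ and $\Psi$ by two applications of Proposition~\ref{p4.1}, the second to its blue analogue, and then deduce (\ref{eq4.7})--(\ref{eq4.8}) from the $L^2$ estimate (\ref{eq4.2}) by a pointwise decomposition and a short $L^1$--$L^2$ interpolation. Concretely, first apply Proposition~\ref{p4.1} to $(\phi,\psi)$ on $Q$ (viewed as a cube of side-length $\approx R$) to obtain a red wave table $\Phi$ of depth $C_0$, and then apply its blue analogue to $(\psi,\Phi)$ to obtain a blue wave table $\Psi$ of depth $C_0$. The hypothesis $margin \gtrsim R^{-1/2}$ is met because the inputs have margin at least $1/100-O(R^{-1/N}) \gg R^{-1/2}$ for $R\ge C_0 2^{C_1}$. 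The margin conclusion (\ref{eq4.5}) follows from two applications of (\ref{eq4.1}), each losing at most $CR^{-1/2}\le R^{-1/N}$, and the energy bound (\ref{eq4.6}) is (\ref{eq4.3}) combined with $E(\phi),E(\psi)=1$.

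For the norm estimates I decompose pointwise
\[|\phi||\psi|=[\Phi]_{C_0}[\Psi]_{C_0}+(|\phi|-[\Phi]_{C_0})|\psi|+[\Phi]_{C_0}(|\psi|-[\Psi]_{C_0}),\]
valid because $[\Phi]_{C_0}\le|\phi|$ and $[\Psi]_{C_0}\le|\psi|$ by (\ref{eq3.2}). By the triangle inequality it suffices to bound the two non-negative terms $F_1=(|\phi|-[\Phi]_{C_0})|\psi|$ and $F_2=[\Phi]_{C_0}(|\psi|-[\Psi]_{C_0})\le|\Phi|(|\psi|-[\Psi]_{C_0})$ in $L^q_tL^p_x$. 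The estimate (\ref{eq4.2}) and its blue analogue (applied with $\Phi$ as the red wave) yield
\[\|F_1\|_{L^2(I^{c,C_0}(Q))},\,\|F_2\|_{L^2(I^{c,C_0}(Q))}\lesssim c^{-C}R^{(1-n)/4}.\]
Since the choice of $\epsilon$ forces $q\le p$, Hölder in time gives $\|F_i\|_{L^q_tL^p_x(\Omega)}\le R^{1/q-1/p}\|F_i\|_{L^p(\Omega)}$, and the standard log-convexity inequality gives $\|F_i\|_{L^p(\Omega)}\le\|F_i\|_{L^1(\Omega)}^{2/p-1}\|F_i\|_{L^2(\Omega)}^{2-2/p}$. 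For (\ref{eq4.7}) I combine this with $\|F_i\|_{L^1(I^{c,C_0}(Q))}\lesssim R$ from (\ref{eq3.12}); the total power of $R$ is
\[\tfrac{1}{q}-\tfrac{1}{p}+\bigl(\tfrac{2}{p}-1\bigr)+\tfrac{1-n}{4}\bigl(2-\tfrac{2}{p}\bigr)=0\]
on the endline $1/q=(n+1)(1-1/p)/2$, yielding the $c^{-C}$ bound. For (\ref{eq4.8}) and $r\le R$ I replace (\ref{eq3.12}) by Corollary~\ref{c3.1}'s cone estimate $\|F_i\|_{L^1(C^P\cap I^{c,C_0}(Q))}\lesssim r^{1/2}R^{1/2}$; the same endline cancellation now leaves $c^{-C}(r/R)^{1/p-1/2}$. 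A short check using $\epsilon<1/(10n)$ (with a small case split at $n=2,3$ versus $n\ge 4$) gives $1/p-1/2\ge\epsilon/4$, so $(r/R)^{1/p-1/2}\lesssim(1+R/r)^{-\epsilon/4}$. For $r\ge R$ the factor $(1+R/r)^{-\epsilon/4}$ is bounded below by a universal constant and (\ref{eq4.8}) reduces to (\ref{eq4.7}) after possibly enlarging $C$.

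Finally, (\ref{eq4.9}) follows by chaining two persistence estimates (\ref{eq4.4}): applied to $\Phi$ it gives $E_{r_1,C_0Q}(\Phi,\psi)\le(1+Cc)E_{r,C_0Q}(\phi,\psi)$, and then (via the blue analogue) applied to $\Psi$ built from $(\psi,\Phi)$ it gives $E_{r_2,C_0Q}(\Phi,\Psi)\le(1+Cc)E_{r_1,C_0Q}(\Phi,\psi)$, where $r_2\ge r(1-C_0 r^{-1/3N})$ for $r\gtrsim R^{1/2+3/N}$. Since $E_{r,C_0Q}$ is bounded by the total energy, the multiplicative $(1+Cc)^2$ converts to an additive $+Cc$. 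I expect the main obstacle to be the exponent calculation for (\ref{eq4.8}): both the clean $R$-cancellation on the endline and the lower bound $1/p-1/2\ge\epsilon/4$ (coming from $\epsilon<1/(10n)$) are exactly what produces the gain $(1+R/r)^{-\epsilon/4}$, and both degenerate off the endline so that the non-endline argument of Lee--Vargas gives only a weaker decay in $R/r$.
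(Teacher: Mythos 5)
Your proposal is correct and follows essentially the same route as the paper: build $\Phi$ from $(\phi,\psi)$ via Proposition~\ref{p4.1} and $\Psi$ from $(\psi,\Phi)$ via its blue analogue, use (\ref{eq4.2}) for the $L^2$ control, the energy bound (\ref{eq3.12}) or Corollary~\ref{c3.1} for the $L^1$ control, interpolate to reach $L^q_tL^p_x$ with the endline-exact cancellation, and chain two persistence estimates (\ref{eq4.4}) for (\ref{eq4.9}). The only cosmetic difference is the order of interpolation — you pass through spacetime $L^p$ by log-convexity and then Hölder in time, whereas the paper goes via $L^q_tL^2_x$ and $L^q_tL^1_x$ — which in fact yields a slightly stronger exponent $(r/R)^{1/p-1/2}$ in place of the paper's, and your verification that $1/p-1/2 \ge \epsilon/4$ under $\epsilon < 1/(10n)$ is sound.
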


\begin{proof}  Define $\Phi:=\Phi_c(\phi,\psi,c)$ as in the Proposition 4.1. Then
 \begin{equation}
 \begin{aligned}
 margin(\Phi)\geq margin(\phi)-CR^{-1/2} &\geq 1/100-2R^{-1/N}-CR^{-1/2} \\ &\geq 1/100-3(1/R)^{1/N}.
 \end{aligned}
 \end{equation}
 Hence we have the margin requirement on $\Phi$. Energy estimate directly follows from the definition of $\Phi$. Let $\Psi:=\Psi_c(\Phi,\psi,c)$. Energy and margin requirements follow from time reversal. 
 
 We now prove (\ref{eq4.7}).  By (\ref{eq4.2}) we have 
\begin{equation}\label{eq4.10}
 \|(|\phi|-[\Phi]_{C_0})\psi\|_{L^2(I^{c,C_0}(Q))} \lesssim c^{-C}R^{\frac{1-n}{4}}.
\end{equation}
 On the other hand by (\ref{eq3.12}), (\ref{eq3.2}) and (\ref{eq4.6})
 \[ \|\phi \psi\|_{L^1(I^{c,C_0}(Q))}, \\  \|[\Phi]_{C_0} \psi\|_{L^1(I^{c,C_0}(Q))} \lesssim R. \]
 So by triangle inequality we have
\begin{equation}\label{eq4.11}
\|(|\phi|-[\Phi]_{C_0})\psi\|_{L^1(I^{c,C_0}(Q))} \lesssim R.
\end{equation}
 By H\"{o}lder (\ref{eq4.10}) gives
\begin{equation}\label{n1}
 \|(|\phi|-[\Phi]_{C_0})\psi\|_{L^q_tL^2_x(I^{c,C_0}(Q))} \lesssim c^{-C}R^{(\frac{1-n}{4}+\frac{2-q}{2q})}.
 \end{equation}
 To handle $L^1$ case, observe that using (\ref{eq2.1}) together with (\ref{eq3.2}), (\ref{eq4.6}) and the triangle inequality one obtains
\[ \|(|\psi|-[\Phi]_{C_0})\psi\|_{L_t^{\infty}L_x^{1}(I^{c,C_0}(Q))} \lesssim 1 . \]
 We interpolate this last inequality with (\ref{eq4.11}) to get,  
 \[\|(|\phi|-[\Phi]_{C_0})\psi\|_{L^q_tL_x^1(I^{c,C_0}(Q))} \lesssim R^{1/q} .\]
Then by interpolating the last one with (\ref{n1}) we obtain
\begin{equation}\label{eq4.12}
\|(|\phi|-[\Phi]_{C_0})\psi\|_{L^q_tL^p_x(I^{c,C_0}(Q))} \lesssim c^{-C}R^{(\frac{1-n}{4}+\frac{2-q}{2q})(2-\frac{2}{p})+\frac{1}{q}(\frac{2}{p}-1)} =c^{-C}.
\end{equation}
By the analogue of (\ref{eq4.2}) for blue waves, (\ref{eq3.2}) and (\ref{eq4.6})   we have
\begin{equation}\label{eq4.13}
\begin{aligned}
\|(|\psi|-[\Psi]_{C_0})[\Phi]_{C_0}\|_{L^2(I^{c,C_0}(Q))} &\leq \|(|\psi|-[\Psi]_{C_0})\Phi\|_{L^2(I^{c,C_0}(Q))}\\  &\lesssim c^{-C}R^{\frac{1-n}{4}}.
\end{aligned}
\end{equation}
For $L^1$ case by (\ref{eq3.12}), (\ref{eq3.2}) and (\ref{eq4.6})  we have  
\[ \| \psi [\Phi]_{C_0} \|_{L^1(I^{c,C_0}(Q))}, \\ \|[\Phi]_{C_0}[\Psi]_{C_0}\|_{L^1(I^{c,C_0}(Q))} \lesssim R \] 
so by the triangle inequality
\begin{equation}\label{eq4.14}
\|(|\psi|-[\Psi]_{C_0})[\Phi]_{C_0}\|_{L^1(I^{c,C_0}(Q))} \lesssim R.
\end{equation}
Then we apply H\"{o}lder and interpolation to (\ref{eq4.13}) and (\ref{eq4.14}) exactly as we did to (\ref{eq4.10}) and (\ref{eq4.11}) to get
\begin{equation}\label{eq4.15}
\|(|\psi|-[\Psi]_{C_0})[\Phi]_{C_0}\|_{L^q_tL^p_x(I^{c,C_0}(Q))} \lesssim c^{-C}.
\end{equation}
The triangle inequality together with (\ref{eq4.15}) and (\ref{eq4.12}) gives (\ref{eq4.7}).

 We will apply the same process to prove (\ref{eq4.8}). Let
 \[ \Omega:=I^{c,C_0}(Q) \cap C^{P}(x_0,t_0;r)\]
 We shall assume $R>r$ since otherwise (\ref{eq4.12}), (\ref{eq4.15}) combined with the triangle inequality and the fact that $\Omega \subseteq I^{c,C_0}(Q)$ gives (\ref{eq4.8}).
 In $L^2$ case using the triangle inequality, (\ref{eq4.10}), (\ref{eq4.13}), and the fact that $\Omega \subseteq I^{c,C_0}(Q)$ we have
 \begin{equation}\label{eq4.16}\| \phi\psi-[\Phi]_{C_0}[\Psi]_{C_0}\|_{L^2(\Omega)} \lesssim c^{-C} R^{\frac{1-n}{4}}.
 \end{equation}
 In $L^1$ case, using Corollary \ref{c3.1}, (\ref{eq3.2}) and (\ref{eq4.6}) we obtain
 \[\|\phi\psi\|_{L^1(\Omega)}, \|[\Phi]_{C_0}[\Psi]_{C_0} \|_{L^1(\Omega)} \lesssim    (r/R)^{1/2} R.\]
 Hence by the triangle inequality
 \begin{equation}\label{eq4.17}
 \| \phi\psi-[\Phi]_{C_0}[\Psi]_{C_0}\|_{L^1(\Omega)} \lesssim (r/R)^{1/2}R .
 \end{equation}
 Apply H\"{o}lder to (\ref{eq4.16}) as above to get
 \begin{equation}  \| \phi\psi-[\Phi]_{C_0}[\Psi]_{C_0}\|_{L^q_tL^2_x(\Omega)} \lesssim c^{-C} R^{\frac{1-n}{4}+\frac{1}{q}- \frac{1}{2}} .   
 \end{equation}
 On the other hand (\ref{eq2.1}) together with (\ref{eq3.2}), (\ref{eq4.6}) and the triangle inequality yields
 \[ \| \phi\psi-[\Psi]_{C_0}[\Psi]_{C_0}\|_{L_t^{\infty}L_x^{1}(\Omega)} \lesssim 1 . \]
  interpolating this with (\ref{eq4.17}) we obtain.
 \begin{equation}\label{eq4.18}
  \| \phi\psi-[\Phi]_{C_0}[\Psi]_{C_0}\|_{L^q_tL^1_x(\Omega)} \lesssim (r/R)^{1/2q}R^{1/q} .
  \end{equation}
 Then interpolating (\ref{eq4.17}) with (\ref{eq4.18}) gives
 \[ \| \phi\psi-[\Phi]_{C_0}[\Psi]_{C_0}\|_{L^q_tL^p_x(\Omega)} \lesssim c^{-C}(r/R)^{\epsilon/4}. \]
 By the triangle inequality we get (\ref{eq4.8}).
 
 Now it remains to prove (\ref{eq4.9}). Fix $r \gtrsim R^{1/2+3/N}$, and pick $\rho$ such that
 $\rho(1-C_0\rho^{-1/2N})=r(1-C_0r^{-1/3N})$. Clearly such a $\rho$ value exists, furthermore it satisfies $\rho \gtrsim R^{1/2+1/N}$, and $\rho \leq r(1-C_0r^{-1/2N}) $.  Then using monotonicity of energy concentration, (\ref{eq4.9}) and its analogue for blue waves we have
 \begin{align*}
 E_{r(1-C_0r^{-1/3N}),C_0Q}(\Phi,\Psi) &=E_{\rho(1-C_0\rho^{-1/2N}),C_0Q}(\Phi,\Psi)\\ &\leq (1+Cc)E_{\rho,C_0Q}(\Phi,\psi)\\  &\leq (1+Cc)E_{r(1-r^{-1/2N}),C_0Q}(\Phi,\psi) \\ &\leq(1+Cc)E_{r,C_0Q}(\phi,\psi).
 \end{align*}
 from which our result follows by the energy normalization. 
 \end{proof}

%===================================================================================================================================================================================================================================================================================================================================================================================================
\section{Proof of Theorem 1.1}\label{s5} 
 
 At the end of section \ref{s2} we localized to cubes, and then in section \ref{s4} to sub-cubes.  The following proposition completes the first paragraph of the sketch of the proof given in section \ref{s2}.

 \begin{proposition}\label{p5.2}Suppose $R \geq 2 C_0 2^{C_1}$ and $0<c \leq 2^ {C_0}$ and $\phi, \psi$ respectively red and blue waves satisfying the energy normalization and the relaxed margin requirement (\ref{rm}).
Then for any cube $Q_R$ of side length $R$ one has
\begin{equation}\label{it} \left\| \phi \psi\right\|_{L^q_t L^p_x (Q)}  \leq (1+Cc)\overline{A}(R/2)E(\phi)^{1/2}E(\psi)^{1/2} + c^{-C} . \end{equation}
\end{proposition}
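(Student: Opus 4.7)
The plan is to combine Lemma \ref{lx} with Proposition \ref{p4.2} to reduce matters to bounding $\|[\Phi]_{C_0}[\Psi]_{C_0}\|_{L_t^q L_x^p}$, and then to exploit the disjointness of the sub-cubes in $K_{C_0}(Q)$ together with the condition $q \leq p$ to split this into a sum of local estimates, each controlled by $\overline{A}(R/2)$.

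Specifically, Lemma \ref{lx} gives a cube $Q \subset C^2 Q_R$ of side-length $CR$ with
\[\|\phi\psi\|_{L_t^q L_x^p(Q_R)} \leq (1+Cc)\,\|\phi\psi\|_{L_t^q L_x^p(I^{c,C_0}(Q))},\]
and Proposition \ref{p4.2} on $Q$ yields red and blue wave tables $\Phi,\Psi$ of depth $C_0$ satisfying (\ref{eq4.5})--(\ref{eq4.7}). In particular (\ref{eq4.7}) gives
\[\|\phi\psi\|_{L_t^q L_x^p(I^{c,C_0}(Q))} \leq \|[\Phi]_{C_0}[\Psi]_{C_0}\|_{L_t^q L_x^p(I^{c,C_0}(Q))} + c^{-C},\]
so it remains to bound the quilt product by $(1+Cc)\,\overline{A}(R/2)$.

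Since $[\Phi]_{C_0}[\Psi]_{C_0} = |\Phi^{(\kappa)}\Psi^{(\kappa)}|$ on each $\kappa \in K_{C_0}(Q)$ and the $\kappa$'s are pairwise disjoint, at each fixed $t$ the $L_x^p$-norm factors as an $\ell^p$-sum over sub-cubes meeting the $t$-slice. Since $p > q$ (so $q/p \leq 1$), the elementary inequality $(\sum b_i)^{q/p} \leq \sum b_i^{q/p}$ together with integration in $t$ produces
\[\|[\Phi]_{C_0}[\Psi]_{C_0}\|_{L_t^q L_x^p(I^{c,C_0}(Q))}^q \leq \sum_{\kappa \in K_{C_0}(Q)} \|\Phi^{(\kappa)}\Psi^{(\kappa)}\|_{L_t^q L_x^p(\kappa)}^q.\]
Each sub-cube $\kappa$ has side-length of order $R/2^{C_0}$, which lies in $[C_0 2^{C_1/2}, R/2]$ thanks to $R \geq 2C_0 2^{C_1}$ and $C_1 \gg C_0$. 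The margin bound (\ref{eq4.5}) of $1/100 - 3R^{-1/N}$ dominates the margin required at scale $R/2^{C_0}$ (because $C_0 \gg N$), so Definition \ref{d2.1} gives
\[\|\Phi^{(\kappa)}\Psi^{(\kappa)}\|_{L_t^q L_x^p(\kappa)} \leq \overline{A}(R/2)\,E(\Phi^{(\kappa)})^{1/2}E(\Psi^{(\kappa)})^{1/2}.\]

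By Cauchy--Schwarz, $\sum_\kappa (E(\Phi^{(\kappa)})E(\Psi^{(\kappa)}))^{q/2} \leq \bigl(\sum_\kappa E(\Phi^{(\kappa)})^q\bigr)^{1/2}\bigl(\sum_\kappa E(\Psi^{(\kappa)})^q\bigr)^{1/2}$. Since $q \geq 1$ and each $E(\Phi^{(\kappa)}) \leq \sum_\kappa E(\Phi^{(\kappa)}) = E(\Phi) \leq 1+Cc$ by (\ref{eq3.1}) and (\ref{eq4.6}), we have $\sum_\kappa E(\Phi^{(\kappa)})^q \leq (1+Cc)^q$, and similarly for $\Psi$, so the product is at most $(1+Cc)^q$. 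Taking $q$-th roots and chaining with the earlier displays produces (\ref{it}). The main subtlety is aligning the constants so that the sub-cubes at depth $C_0$ fall in the range governed by $\overline{A}(R/2)$ with the correct margin; this is exactly where the hierarchy $C \ll N \ll C_0 \ll C_1$ and the hypothesis $R \geq 2C_0 2^{C_1}$ are used.
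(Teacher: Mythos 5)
Your proof is correct and follows essentially the same route as the paper's: Lemma~\ref{lx}, then (\ref{eq4.7}) of Proposition~\ref{p4.2}, decomposition over $K_{C_0}(Q)$ with Definition~\ref{d2.1} and (\ref{eq4.5}), and finally Cauchy--Schwarz together with (\ref{eq3.1}), (\ref{eq4.6}). The only cosmetic deviation is that you invoke the disjoint-support $q$-power inequality (Lemma~\ref{l2.1}) where the paper simply applies the triangle inequality; the paper reserves Lemma~\ref{l2.1} for the later propositions where a gain strictly below $1$ is genuinely required.
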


\begin{proof}
 Using Lemma \ref{lx} with $F:=\phi\psi$ we can find a cube Q of side-length CR inside $C^2Q_R$ such that
\[\|\phi\psi\|_{L^q_tL^p_x(Q_R)} \leq (1+Cc)\|\phi\psi\|_{L^q_tL^p_x(I^{c,c_0}(Q))}.\]
Let $\Phi,\Psi$ be as in Proposition \ref{p4.2}. Then by  (\ref{eq4.7}), we have
\begin{equation}\label{eq5.1}
\|\phi\psi\|_{L^q_tL^p_x(Q_R)} \leq (1+Cc)\|[\Phi]_{C_0}[\Psi]_{C_0}\|_{L^q_tL^p_x(I^{c,C_0}(Q))}+ c^{-C}.
\end{equation}
Applying the triangle inequality we get
\[ \|[\Phi]_{C_0}[\Psi]_{C_0}\|_{L^q_tL^p_x(Q)} \leq \sum_{q \in K_{C_0}(Q)} \|\Phi^{(q)}\Psi^{(q)}\|_{L^q_tL^p_x(q)}.\]
Then (\ref{eq4.5}) combined with Definition \ref{d2.1} gives
\[\|[\Phi]_{C_0}[\Psi]_{C_0}\|_{L^q_tL^p_x(Q)} \leq  A(2^{-C_0}R) \sum_{q \in K_{C_0}(Q)} E(\Phi^{(q)})^{1/2}E(\Psi^{(q)})^{1/2}\]
Cauchy-Schwarz combined with (\ref{eq3.1}) and (\ref{eq4.6}) we obtain
\[ \|[\Phi]_{C_0}[\Psi]_{C_0}\|_{L^q_tL^p_x(Q)} \leq (1+Cc)\overline{A}(R/2).\]
which, inserted to (\ref{eq5.1}), gives the desired result.
\end{proof}

Iterating (\ref{it})  and using a globalization lemma gives non-endpoint/non-endline results; see \cite{LV1}, \cite{T1}. For our purposes, set $c=(1/R)^{1/N}$,  iterate (\ref{it}) and use (\ref{pb}) when $R \approx C_02^{C_1}$ to obtain
\begin{equation}\label{cn}
A(R)\lesssim 2^{CC_1}R^{C/N}
\end{equation}  
This last inequality proves (\ref{eq2.2}) for all $2C_02^{C_1} \leq R \leq  C_02^{NC_1}$. For larger $R$ we shall  introduce the notion of energy concentration. 

\begin{definition}\label{d5.1}
Let $r>0$. Let $Q$ be a space-time cube of side-length $R$, let $\phi$ a red wave, and $\psi$ a blue wave. The the energy concentration  $E_{r,Q}$ is defined to be  
 \[E_{r,Q}(\phi,\psi):= \max \left\{ \frac{1}{2} E(\phi)^{1/2} E(\psi) ^{1/2},\sup_D \left\| \phi \right\|_{L^2(D)} \left\| \psi \right\|_{L^2(D)}   \right\} \]
 where supremum is taken over all disks of radius $r$ whose time coordinate is inside the life-span of $Q$.
 \end{definition} 
The next definition gives a variant of $A(R)$ 
 which is sensitive to energy concentration, allows one to do induction on scales in cone neighborhoods successfully, and can be related to $A(R)$. With this variant at hand one first bounds $A(R)$ by this variant with some gain, then handles concentrated and non-concentrated cases separately.

 \begin{definition}\label{d5.2} Let  $R \geq 2^{NC_1/2}$ and $r,r'>0$. Then $A(R,r,r')$ is defined to be the best constant for which the inequality
 \[  \left\| \phi \psi \right\|_{L^q_tL^p_x(Q_R \cap C^{P}(x_0,t_0;r'))} \leq A(R,r,r') (E(\phi)^{1/2} E(\psi) ^{1/2})^{1/q} E_{r,C_0Q_R}(\phi,\psi)^{1/q'} \]
 holds for all spacetime cubes $Q_R$ of side-length $R$, all $(x_0,t_0) \in \textbf{R}^{n+1}$, red waves $\phi$ and  blue waves $\psi$  that obey the strict margin requirement (\ref{smr}).
 \end{definition}

While using this definition to do induction on scales in cone neighborhoods, and to bound $A(R)$ with $A(R,r,r')$ one needs to use the following lemma instead of the triangle inequality to make some exponential gain.

\begin{lemma}\label{l2.1}
Let $f_1,f_2...f_k$ be a finite collection of functions such that $f_j:\textbf{R}^{n+1}\rightarrow H $, and $f_j \in L^q_tL^p_x(\textbf{R}^{n+1}), \ 1\leq j \leq k$ where H is a finite dimensional complex Hilbert space. If $q<p$ and supports of these functions are mutually disjoint then
\[ \big\| \sum_{j=1}^k f_j \big \|_{L^q_tL^p_x}^q \leq \sum_{j=1^k} \left\|f_j \right\|_{L^q_tL^p_x}^q . \]

\end{lemma}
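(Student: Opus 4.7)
The plan is to reduce the mixed-norm inequality to a pointwise-in-$t$ identity via the disjoint-support hypothesis, and then to extract a sub-additivity gain from the assumption $q<p$. No clever harmonic-analytic input is needed; this is a two-line elementary computation once the structure is exposed.

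First I would fix $t\in\mathbb{R}$ and observe that since the $f_j$ have pairwise disjoint supports in $\mathbb{R}^{n+1}$, at each spatial point $x$ at most one summand $f_j(x,t)$ is nonzero. This upgrades the triangle inequality to the pointwise identity $|\sum_j f_j(x,t)|_H^p=\sum_j|f_j(x,t)|_H^p$, which, integrated in $x$, yields the pseudo-Pythagorean relation
\[\Bigl\|\sum_{j=1}^k f_j(\cdot,t)\Bigr\|_{L^p_x}^p = \sum_{j=1}^k\|f_j(\cdot,t)\|_{L^p_x}^p.\]
Raising to the power $q/p$ and integrating in $t$ rewrites the left-hand side of the lemma as
\[\Bigl\|\sum_{j=1}^k f_j\Bigr\|_{L^q_t L^p_x}^q = \int\Bigl(\sum_{j=1}^k\|f_j(\cdot,t)\|_{L^p_x}^p\Bigr)^{q/p}dt.\]
At this point I would invoke the elementary sub-additivity inequality $(a_1+\cdots+a_k)^r\leq a_1^r+\cdots+a_k^r$, valid for $a_j\geq 0$ and $0<r\leq 1$, applied with $r=q/p$; this exponent lies strictly below $1$ precisely by the hypothesis $q<p$. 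Taking $a_j=\|f_j(\cdot,t)\|_{L^p_x}^p$ bounds the integrand pointwise in $t$ by $\sum_j\|f_j(\cdot,t)\|_{L^p_x}^q$, and integrating in $t$ then produces $\sum_j\|f_j\|_{L^q_t L^p_x}^q$, finishing the proof.

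There is no genuine obstacle; the interest of the lemma lies rather in how the two hypotheses conspire. The disjointness is what converts Minkowski's inequality at the inner $L^p_x$ level into an equality, and the condition $q<p$ is what makes $s\mapsto s^{q/p}$ concave and hence sub-additive on the non-negative reals. Without disjoint supports the inner identity degrades to a triangle inequality and the argument collapses; without $q<p$ the direction of the power inequality reverses and one recovers (at best) the usual Minkowski bound, which goes the wrong way for the applications in Proposition \ref{p4.2} and the induction-on-scales step that follows.
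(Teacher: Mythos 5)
Your proof is correct and follows essentially the same route as the paper: exploit pairwise disjointness of supports to turn the inner $L^p_x$ integral of the sum into a sum of $L^p_x$ integrals, and then apply the sub-additivity of $s\mapsto s^{q/p}$ (valid since $q<p$ gives exponent below $1$) before integrating in $t$. The paper states this in a two-line display; your writeup merely unpacks the same two steps more explicitly.
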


\begin{proof}
 We first exploit disjointness of supports, and then concavity:
\begin{align*}
\int (\int |\sum_{j=1}^k f_j(x,t)|^p dx )^{q/p}dt &= \int( \sum_{j=1}^k \int |f_j(x,t)|^p dx )^{q/p}  dt\\                                                                         &\leq \sum_{j=1}^k \int (\int |f_j(x,t)|^pdx )^{q/p}dt.           
\end{align*}

\end{proof}

This lemma shows that the following fact about $L^p$ norms extends partially to mixed norms.
Let $f_1,f_2...f_k$ be a finite collection of functions such that $f_j:\textbf{R}^{n+1}\rightarrow H $, and $f_j \in L^p(\textbf{R}^{n+1}),\  1\leq j \leq k$ where H is a finite dimensional complex Hilbert space. If supports of these functions are mutually disjoint then 
 
 \[\big\| \sum_{j=1}^k f_j   \big\|_p^p = \sum_{j=1}^k \left\|f_j \right\|_p^p .\]

Our lemma, of course, is not so good as the property of $L^p$ norms given above, but will do in our case.
We now  exploit non-concentration and  relate $A(R)$ to $A(R,r,r')$ with some gain.

\begin{proposition}\label{p5.3} 
 Let $R\geq 2^{NC_1}$. Then we have
 \[ A(R) \leq (1-C_0^{-C}) \sup_{\stackrel{2^{NC_1} \leq \widetilde{R} \leq R}{\widetilde{R}^{1/2+4/N} \leq r}}  A(\widetilde{R},r,C_0(1+r))  +2^{CC_1}. \] 
\end{proposition}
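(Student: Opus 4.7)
The plan is to begin with the sub-cube wave-table decomposition from Proposition \ref{p5.2}, then on each sub-cube perform a dichotomy based on the energy-concentration profile at a carefully chosen scale $r$, so that in either branch one can replace the bound $A(\widetilde{R})$ by the cone-restricted bound $A(\widetilde{R},r,C_0(1+r))$ multiplied by a strict gain factor $(1-C_0^{-C})$.

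First I would apply Lemma \ref{lx} to replace $Q_R$ with a slightly enlarged cube $Q$ of side length $CR$ at a cost of $(1+Cc)$, and then Proposition \ref{p4.2} with margin parameter $c=C_0^{-C}$ to obtain red and blue wave tables $\Phi,\Psi$ of depth $C_0$ on $Q$ for which (\ref{eq4.7}) replaces $\phi\psi$ by the quilt product $[\Phi]_{C_0}[\Psi]_{C_0}$ on $I^{c,C_0}(Q)$ modulo an additive error $c^{-C}\lesssim 2^{CC_1}$. Because the sub-cubes $q\in K_{C_0}(Q)$ are pairwise disjoint, Lemma \ref{l2.1} applied to the pieces $|\Phi^{(q)}\Psi^{(q)}|\chi_q$ gives
\[
\bigl\|[\Phi]_{C_0}[\Psi]_{C_0}\bigr\|_{L^q_tL^p_x(Q)}^{q}
\leq \sum_{q\in K_{C_0}(Q)}\bigl\|\Phi^{(q)}\Psi^{(q)}\bigr\|_{L^q_tL^p_x(q)}^{q},
\]
reducing everything to estimating each $\|\Phi^{(q)}\Psi^{(q)}\|_{L^q_tL^p_x(q)}$ individually.

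On each sub-cube $q$, of side length $\widetilde{R}=2^{-C_0}CR$, the pair $(\Phi^{(q)},\Psi^{(q)})$ is admissible for Definition \ref{d5.2} by (\ref{eq4.5})--(\ref{eq4.6}), and $\widetilde{R}\geq 2^{NC_1}$ follows from $R\geq 2^{NC_1}$ and $C_0\ll NC_1$. I would select a dyadic concentration scale $r$ with $\widetilde{R}^{1/2+4/N}\leq r\lesssim \widetilde{R}$ and set $r'=C_0(1+r)\gtrsim \widetilde{R}$, then split into two cases. In Case A (non-concentration), some admissible $r$ satisfies $E_{r,C_0q}(\Phi^{(q)},\Psi^{(q)})\leq (1-2C_0^{-C})(E(\Phi^{(q)})E(\Psi^{(q)}))^{1/2}$; since $r'\gtrsim \widetilde{R}$ the cube $q$ sits inside a single cone neighbourhood $C^P(x_0,t_0;r')$, and the definition of $A(\widetilde{R},r,r')$ combined with the strict bound on $E_r$ extracts the gain $(1-2C_0^{-C})^{1/q'}\leq 1-C_0^{-C}$ through the $E_r^{1/q'}$ factor of the normalization. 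In Case B (concentration), at every admissible $r$ there is a disk $D_r$ with $\|\Phi^{(q)}\|_{L^2(D_r)}\|\Psi^{(q)}\|_{L^2(D_r)}>(1-2C_0^{-C})(E(\Phi^{(q)})E(\Psi^{(q)}))^{1/2}$; here I would invoke the Huygens-type inequality (\ref{eq4.8}) to show that $\Phi^{(q)}\Psi^{(q)}$ is essentially supported on the cone neighbourhood $C^P(x_{D_r},t_{D_r};r')$ up to the controlled error $(1+\widetilde{R}/r)^{-\epsilon/4}$, then apply $A(\widetilde{R},r,r')$ to the restriction to this cone and bound the complement via the Huygens error. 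The gain here comes from the small factor $(1+\widetilde{R}/r)^{-\epsilon/4}$ replacing the $c^{-C}$ one would see on the bare cube, provided $r$ is chosen strictly smaller than $\widetilde{R}$.

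Finally, summing the sub-cube estimates via Lemma \ref{l2.1}, applying Cauchy--Schwarz to $\sum_q E(\Phi^{(q)})\leq 1+Cc$ and $\sum_q E(\Psi^{(q)})\leq 1+Cc$ from (\ref{eq4.6}), and absorbing the $(1+Cc)$ losses into the multiplicative factor by choosing $c$ as a sufficiently negative power of $C_0$ yields the claimed bound $A(R)\leq (1-C_0^{-C})\sup A(\widetilde{R},r,C_0(1+r))+2^{CC_1}$. The main obstacle will be Case B: converting the additive Huygens error $(1+\widetilde{R}/r)^{-\epsilon/4}$ into a uniform multiplicative gain of $(1-C_0^{-C})$ requires a careful quantitative balance between the cone scale $r'$ and the sub-cube scale $\widetilde{R}$, and the persistence of non-concentration (\ref{eq4.9}) must be invoked to relate concentration on the components $(\Phi^{(q)},\Psi^{(q)})$ to concentration on the original pair $(\phi,\psi)$ so that the chosen parameters $(\widetilde{R},r)$ lie in the range allowed by the supremum.
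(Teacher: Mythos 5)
Your proposal departs from the paper's argument in a way that creates a real gap. The paper does not invoke Proposition~\ref{p4.2}/wave tables in the proof of Proposition~\ref{p5.3} at all: it works directly at scale $R$, not at the sub-cube scale $2^{-C_0}R$. (Incidentally, forcing $\widetilde{R}=2^{-C_0}CR$ also breaks the range check: the hypothesis $R\geq 2^{NC_1}$ does not yield $\widetilde{R}\geq 2^{NC_1}$, whereas the paper keeps $\widetilde{R}$ free to be as large as $R$.) More importantly, the paper's dichotomy is not ``some admissible $r$ non-concentrated'' vs.\ ``every admissible $r$ concentrated.'' Instead it defines a single \emph{transition scale} $r$, namely the supremum of radii with $E_{r,C_0Q_R}(\phi,\psi)\leq 1-\delta$; by construction, the pair is non-concentrated at $r$ itself (giving the $(1-\delta)^{1/q'}$ leverage from Definition~\ref{d5.2}) \emph{and} there is a disk $D$ of radius just above $r$ carrying nearly all the energy of both waves (giving the Huygens localization via $P_{D'}$ and Lemma~\ref{l3.2}/Lemma~\ref{l5.1}). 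These two facts hold \emph{simultaneously}, so the paper never needs to manufacture a multiplicative gain out of the additive Huygens error.

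In your Case~B (``concentration at every admissible $r$'') you have $E_{r,C_0q}\approx 1$ for all $r$, so the $E_r^{1/q'}$ factor in Definition~\ref{d5.2} is useless and the only remaining handle is the additive error $c^{-C}(1+\widetilde{R}/r)^{-\epsilon/4}$ from (\ref{eq4.8}). You correctly flag that converting this into the multiplicative $(1-C_0^{-C})$ is ``the main obstacle,'' but it is not a technical balancing issue to be deferred — there is no mechanism for it, because an additive error that does not vanish cannot produce a strict contraction in front of $\sup A(\widetilde{R},r,r')$. Your Case~B simply cannot close. The fix is the paper's choice of $r$: the transition scale makes Case~B empty — below $r$ you always have room, and the concentrating disk lives at radius slightly above $r$. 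A further smaller issue: you claim that $r'=C_0(1+r)\gtrsim\widetilde{R}$ puts $q$ inside a single cone neighborhood, but with $r$ as small as $\widetilde{R}^{1/2+4/N}$ this is false; the paper handles exactly this regime (its Case~2, $r\leq R^{1/2+4/N}$) by passing to a cube $Q(x_0,t_0;\widetilde{R})$ of reduced side length $\widetilde{R}=r^{1/(1/2+4/N)}$ centered at the concentrating disk and discarding $Q_R\setminus Q(x_0,t_0;\widetilde{R})$ via Lemma~\ref{l5.1}, not by sub-cube decomposition.
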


We shall need the following lemma in the proof.

 \begin{lemma}\label{l5.1} 
 Let $R \geq 2^{NC_1}$ and $2^{ NC_1/2} \leq r \leq R^{1/2+4/N}$. Let $D=D(x_D,t_D;C_0^{1/2}r)$ be a disk. Let $\phi,\psi$ be respectively red and blue waves  with $margin(\phi),margin(\psi)\geq 1/200$. Then we have  
   \[ \left\| (P_{D}\phi) \psi \right \|_{L^q_tL^p_x(Q^{ann}(x_0,t_0;R,2R))} \lesssim   R^{-1/C}E(\phi)^{1/2}E(\psi)^{1/2} \] 
 \[  \left\| \phi P_{D} \psi \right\|_{L^q_tL^p_x(Q^{ann}(x_0,t_0;R,2R) )}  \lesssim R^{-1/C}E(\phi)^{1/2}E(\psi)^{1/2}. \]
 \end{lemma}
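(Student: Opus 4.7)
By the time-reversal symmetry of the problem (the operator $\textbf{T}$ swaps red and blue waves), the second estimate follows from the first applied after reversing time, so I focus on proving the first inequality.

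The plan is to combine Huygens' principle with the transversality-based $L^1$ estimate on cone neighborhoods. Since $Q^{ann}(x_0,t_0;R,2R)\subset Q(x_0,t_0;2R)$, I first apply the Huygens estimate (\ref{eq3.10}) from Lemma \ref{l3.2} with the containing cube $Q(x_0,t_0;2R)$ to obtain
\[\big\|(P_D\phi)\psi\big\|_{L^q_tL^p_x(Q^{ann}\setminus C^R(x_D,t_D;CC_0^{1/2}r+(2R)^{1/N}))} \lesssim R^{C-N}E(\phi)^{1/2}E(\psi)^{1/2},\]
the case $|t_0-t_D|>C_0R$ being handled by the same argument using the pointwise decay (\ref{eq.k}) of the kernel $K_{t-t_D}$. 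Since $N=2^{n^{10}}$ is much larger than $C$, this error is much smaller than $R^{-1/C}E(\phi)^{1/2}E(\psi)^{1/2}$. The problem thus reduces to estimating $\|(P_D\phi)\psi\|_{L^q_tL^p_x(\Omega)}$ on the set $\Omega:=Q^{ann}\cap C^R(x_D,t_D;CC_0^{1/2}r+(2R)^{1/N})$; since $r\geq 2^{NC_1/2}$ dominates $(2R)^{1/N}=2^{C_1}$, the effective cone radius here is $\sim r$.

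On $\Omega$, Corollary \ref{c3.1} (applied with containing cube $Q(x_0,t_0;2R)$ and cone parameter $\sim r$) produces the transversality-improved $L^1$ bound $\|(P_D\phi)\psi\|_{L^1(\Omega)}\lesssim r^{1/2}R^{1/2}E(\phi)^{1/2}E(\psi)^{1/2}$. Combining this with the uniform bound $\|(P_D\phi)\psi\|_{\infty}\lesssim E(\phi)^{1/2}E(\psi)^{1/2}$ from (\ref{infty2}) and the $L^\infty_tL^1_x$ bound from (\ref{eq2.1}) via H\"{o}lder in time produces an intermediate estimate of the shape $\lesssim R^{1/q-1/(2p)}r^{1/(2p)}E(\phi)^{1/2}E(\psi)^{1/2}$. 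The final task is to check this is $\lesssim R^{-1/C}E(\phi)^{1/2}E(\psi)^{1/2}$ using the constraint $r\leq R^{1/2+4/N}$ together with the endline relation $1/q=\frac{n+1}{2}(1-1/p)$. The hardest part is precisely this exponent bookkeeping: at the endline the crude $L^1$--$L^\infty$ interpolation alone does not immediately produce a negative power of $R$, so one must refine the argument by partitioning $\Omega$ into disjoint cone shells or angular sectors and invoking Lemma \ref{l2.1} (which exploits $q<p$) to extract an additional $q$-th power saving, thereby converting the endline relation into the required $R^{-1/C}$ decay.
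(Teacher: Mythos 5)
Your time-reversal reduction to the first estimate is correct, and using Huygens' principle (\ref{eq3.10}) to localize to a cone shell $\Omega$ together with Corollary~\ref{c3.1} for the $L^1(\Omega)$ bound is the right geometric input. However, the interpolation you then run is not enough: it uses as anchors only $\|(P_D\phi)\psi\|_{L^1(\Omega)}\lesssim r^{1/2}R^{1/2}$, $\|(P_D\phi)\psi\|_\infty\lesssim 1$, and $\|(P_D\phi)\psi\|_{L^\infty_tL^1_x}\lesssim 1$, none of which carries an $R$-gain except the first, and the resulting exponent is \emph{positive}, not negative. Indeed your own intermediate bound $R^{1/q-1/(2p)}r^{1/(2p)}$, after inserting $r\le R^{1/2+4/N}$, has $R$-exponent
\[
\frac{1}{q}-\frac{1}{4p}+\frac{2}{Np}=\frac{n+1}{2}-\frac{2n+3}{4p}+O(1/N),
\]
which at the endline value $1/p$ (with $1/q=\min(1,\tfrac{n+1}{4})-\epsilon$) is about $5/8$ for $n=2$ and at least $1/2$ for $n\ge 3$. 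So this step produces growth in $R$, and no amount of re-partitioning $\Omega$ into cone shells and invoking Lemma~\ref{l2.1} can repair it: that lemma only gives $\ell^q$-superadditivity of disjointly supported pieces, a mild refinement of the triangle inequality, and cannot upgrade the $L^1$ and $L^\infty$ inputs you started from.

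The ingredient you are missing is the bilinear $L^2$ estimate on the annulus,
\[
\|(P_D\phi)\psi\|_{L^2(Q^{\mathrm{ann}}(x_0,t_0;R,2R))}\lesssim R^{(1-n)/4+C/N}E(\phi)^{1/2}E(\psi)^{1/2},
\]
the analogue of (\ref{eq4.2}). This is the hard wave-packet/Kakeya input, and the paper's proof of Lemma~\ref{l5.1} simply reduces to this $L^2$ bound together with the $L^1$ bound $\lesssim R^{3/4+C/N}$, citing the corresponding estimates in \cite{T1} (observing that the frequency of $\psi$ plays no role there, so the same proof applies to $(P_D\phi)\psi$; the case $\phi\,P_D\psi$ then follows by time reversal). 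With this $L^2$ bound one runs exactly the H\"older-in-time plus interpolation scheme from the proof of Proposition~\ref{p4.2}: $\|\cdot\|_{L^q_tL^2_x}\lesssim R^{(1-n)/4+1/q-1/2+C/N}$ and $\|\cdot\|_{L^q_tL^1_x}\lesssim R^{(3/4+C/N)/q}$, interpolated with weight $\theta=2-2/p$, yield an $R$-exponent of order $-\epsilon/4+O(1/N)<0$, which gives the claimed $R^{-1/C}$. Without the $L^2$ anchor the endline cancellation never occurs, which is exactly the failure your exponent bookkeeping revealed.
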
 
 We first prove the lemma, then the proposition. 

\begin{proof} By translation invariance we can take  $(x_0,t_0)=(0,0)$. First we consider $\left\| (P_{D}\phi) \psi \right\|_{L^q_tL^p_x(Q^{ann}(x_0,t_0;R,2R)  )} .$  By using H\"{o}lder and interpolation as in the proof of Proposition 4.1 it suffices to prove
  \[ \left\| (P_{D}\phi) \psi \right\|_{L^1(Q^{ann}(x_0,t_0;R,2R))} \lesssim R^{C/N}R^{3/4}, \]
  \[\left\| (P_{D}\phi) \psi \right\|_{L^2(Q^{ann}(x_0,t_0;R,2R))} \lesssim R^{C/N}R^{\frac{1-n}{4}}.\]
  But frequency of $\psi$ plays no role in the proof given in \cite{T1} and so the same proof works. For $\left\| \phi (P_{D} \psi )\right\|_{L^q_tL^p_x(Q^{ann}(x_0,t_0;R,2R))}$ since we have no difference between frequencies of $\phi$ and $\psi$, by time reversal we get the same result without any loss.
\end{proof}

 \begin{proof} Let $Q_R$ be a spacetime cube of side-length R. Let $\phi,\psi$ be respectively red and blue waves  with strict margin requirement (\ref{smr}) and the energy normalization. Clearly it suffices to prove
 
  \[ \left\| \phi \psi \right\|_{L^q_tL^p_x(Q_R)} \leq (1-C_0^{-C}) \sup_{\stackrel{2^{NC_1} \leq \widetilde{R} \leq R}{\widetilde{R}^{1/2+4/N} \leq r} }  A(\widetilde{R}, r,C_0(1+r)) + 2^{CC_1} .  \]
  We may of course assume that $\left\| \phi\psi\right\|_ {L^q_tL^p_x(Q_R)}\approx A(R)$ and that $A(R) \geq 2^{CC_1}$. Let $0< \delta < 1/4$ be a small number to be specified later, and let $r$ be the supremum of all radii $r \geq 2^{NC_1(1/2+4/N)}$ such that $E_{r,C_0Q_R} (\phi,\psi) \leq 1-\delta $ or $r=2^{NC_1(1/2+4/N)}$ if no such radius exists.  Let $D:=D(x_0,t_0;r)$ be a disk with $t_D$ in the lifespan of $C_0Q_R$, and  
  \begin{equation}\label{eq5.2}
  min(\left\|\phi \right\|_{L^2(D)}, \left\|\psi \right\|_{L^2(D)}) \geq 1-2\delta.
  \end{equation}
   Such a disk clearly exists by the definition of r. 
   Let $D'=C_0^{1/2}D$ and $ \Omega = Q_R \cap C^{P}(x_0,t_0;C_0(1+r))$. Let $\phi=(1-P_{D'})\phi+ P_{D'}\phi $, and $\psi=(1-P_{D'})\psi+ P_{D'}\psi$.
   
   We have two cases: $r>R^{1/2+4/N}$ or $r\leq R^{1/2+4/N}$. So first assume $r>R^{1/2+4/N}$.    Then by  (\ref{eq3.7}),  (\ref{eq3.8}) and (\ref{eq5.2}) we have 
   \[E((1-P_{D'})\phi),E((1-P_{D'})\psi) \lesssim \delta +C_0^{-C} .\] 
   Thus by (\ref{eq.r'}) one has 
   \begin{equation}\label{eq5.3}
    \left\|(1-P_{D'})\phi(1-P_{D'})\psi \right\|_{L^q_tL^p_x(Q_R)} \lesssim (\delta+C_0^{-C})A(R)  .
   \end{equation}
   By (\ref{eq3.10}) and its analogue for blue waves we have  
   \[ \left\| (P_{D'}\phi) \psi \right\|_{L^q_tL^p_x(Q_R\setminus \Omega)},  \left\| (1-P_{D'})\phi P_{D'}\psi \right\|_{L^q_tL^p_x(Q_R\setminus \Omega)}  \lesssim C_0^{-C}.\]
   Then by the triangle inequality and our assumptions on $A(R)$ at the beginning of the proof we have 
   \[
    \left\| \phi \psi \right\|_{L^q_tL^p_x(Q_R\setminus \Omega)} \lesssim (\delta+C_0^{-C})A(R) \lesssim (\delta+C_0^{-C})\left\| \phi \psi \right\|_{L^q_t L^p_x(Q_R)} .
   \]
Here we will use Lemma \ref{l2.1} instead of directly applying triangle inequality. This is where we cede the uppermost endpoint $(n+1/n-1,1)$ when $n \geq 3.$ 
 \begin{align*} \| \phi \psi \|^q_{L^q_tL^p_x(Q_R)}  &\leq   
 \| \phi \psi \|^q_{L^q_tL^p_x(Q_R \setminus \Omega)} +   \| \phi \psi \|^q_{L^q_tL^p_x( \Omega)} \\ &\leq C(\delta+C_0^{-C})^q \| \phi \psi \|^q_{L^q_tL^p_x(Q_R)} +  \| \phi \psi \|^q_{L^q_tL^p_x( \Omega)}  .
 \end{align*} 
  Hence,
  \[ \left\| \phi \psi \right\|_{L^q_tL^p_x( \Omega)} \geq (1-C(\delta+C_0^{-C})^q)^{1/q} \left\| \phi \psi \right\|_{L^q_tL^p_x(Q_R)}.  \]  
On the other hand, by our assumption $r>R^{1/2+4/N}$ and by the  definition of $r$ we have
  \[\left\| \phi \psi \right\|_{L^q_tL^p_x( \Omega)} \leq A(R,r,C_0(1+r))(1-\delta)^{1/q'} .  \]
  But then setting $\delta=C_0^{-C}$ one obtains the desired estimate.
  
  Now we handle the second case. Define $\widetilde{R}:= r^{\frac{1}{1/2+4/N}}$. Thus 
$2^{NC_1}\leq \widetilde{R} \leq R$ and $r\geq \widetilde{R}^{1/2+4/N}$.  If  $\widetilde{R} > 2^{NC_1}$ then by Definition \ref{d5.2} one has
\[ \| \phi \psi \|_{L^q_tL^p_x(Q(x_0,t_0;\widetilde{R}) \cap \Omega)} \leq A(\widetilde{R},r,C_0(1+r))(1-\delta)^{1/q'}. \]
If $R=2^{NC_1}$ then we have by (\ref{cn})   
\[\| \phi \psi \|_{L^q_tL^p_x(Q(x_0,t_0;\widetilde{R}) \cap \Omega)} \leq 2^{CC_1}.\]
  Note that with this definition of $\widetilde{R}$, we can obtain
  \[
    \| \phi \psi \|_{L^q_tL^p_x(Q(x_0,t_0;\widetilde{R}) \setminus \Omega)} \lesssim (\delta+C_0^{-C})\| \phi \psi \|_{L^q_tL^p_x(Q_R)} \]
  by the same arguments as above. 
  Hence if we can show that 
  \[ \left\| \phi \psi \right\|_{L^q_tL^p_x(Q_R \setminus Q(x_0,t_0;\widetilde{R}) )} \lesssim (\delta+C_0^{-C})A(R)   \]
  then we apply the Lemma \ref{l2.1} as we did above and obtain the desired result. To show this together with (\ref{eq5.3}) we need the estimates
  \begin{align*} 
  \left\| (P_{D'}\phi) \psi \right\|_{L^q_tL^p_x(Q_R \setminus Q(x_0,t_0;\widetilde{R})) } &\lesssim  (\delta+C_0^{-C})A(R), \\ \left\| ((1-P_{D'})\phi) P_{D'} \psi \right\|_{L^q_tL^p_x(Q_R \setminus Q(x_0,t_0;\widetilde{R}) )}  &\lesssim (\delta+C_0^{-C})A(R). 
  \end{align*} 
  But by a dyadic decomposition these would follow Lemma \ref{l5.1}.
  \end{proof}

Now it remains to bound  $A(R,r,r')$ by $A(R)$. This we will do in two steps: the non-concentrated case and the concentrated case. First we deal with the non-concentrated case.

\begin{proposition}\label{p5.5}
Let $R\geq 2^{NC_1/2}$, $r \geq C_0^C R$, $r'>0$ and $0<c \leq 2^{-C_0}$. Then we have
 \[ A(R,r,r')\leq (1+Cc)\overline{A}(R) + c^{-C}.\]
 \end{proposition}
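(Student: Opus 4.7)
The plan is to reduce the estimate to Proposition~\ref{p5.2} by using finite speed of propagation to localize $\phi,\psi$ to an $O(R)$--neighborhood of $Q_R$, and then to exploit the hypothesis $r\geq C_0^{C}R$ to replace the energy of the localized waves by $E_{r,C_0Q_R}(\phi,\psi)$ with only negligible error. Dropping the cone restriction gives $\|\phi\psi\|_{L^q_tL^p_x(Q_R\cap C^P)}\leq\|\phi\psi\|_{L^q_tL^p_x(Q_R)}$, and by the homogeneity of the inequality in Definition~\ref{d5.2} we may assume $E(\phi)=E(\psi)=1$.

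Fix $(x_Q,t_Q)$ to be the center of $Q_R$ and let $B=D(x_Q,t_Q;C_{*}R)$ with $C_{*}$ a constant chosen large enough that the enlargement $B_{+}$ from Lemma~\ref{l3.1} lies inside $D(x_Q,t_Q;r)$; this is legitimate precisely because $r\geq C_0^{C}R$ with $C$ sufficiently big. Set $\tilde{\phi}=P_B\phi$ and $\tilde{\psi}=P_B\psi$. By~(\ref{eq3.3}) these waves obey the relaxed margin requirement~(\ref{rm}), and Lemma~\ref{l3.2} together with its blue analogue applied to the splitting $\phi\psi-\tilde{\phi}\tilde{\psi}=((1-P_B)\phi)\psi+\tilde{\phi}((1-P_B)\psi)$ yields
\[
\|\phi\psi-\tilde{\phi}\tilde{\psi}\|_{L^q_tL^p_x(Q_R)}\lesssim R^{C-N}.
\]
Proposition~\ref{p5.2} applied to $\tilde{\phi},\tilde{\psi}$ (after the homogeneous rescaling to unit energy) then gives
\[
\|\tilde{\phi}\tilde{\psi}\|_{L^q_tL^p_x(Q_R)}\leq\bigl[(1+Cc)\overline{A}(R/2)+c^{-C}\bigr]E(\tilde{\phi})^{1/2}E(\tilde{\psi})^{1/2},
\]
and (\ref{eq3.6}) together with the inclusion $B_{+}\subset D(x_Q,t_Q;r)$ and the fact that $t_Q$ lies in the lifespan of $C_0Q_R$ furnishes
\[
E(\tilde{\phi})^{1/2}E(\tilde{\psi})^{1/2}\leq E_{r,C_0Q_R}(\phi,\psi)+CR^{-N/2}.
\]

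The trivial inequality $E_{r,C_0Q_R}(\phi,\psi)\leq(E(\phi)^{1/2}E(\psi)^{1/2})^{1/q}E_{r,C_0Q_R}(\phi,\psi)^{1/q'}$, which follows from $E_{r,C_0Q_R}\leq E(\phi)^{1/2}E(\psi)^{1/2}$, reshapes the main term into the form demanded by Definition~\ref{d5.2}, and the monotonicity $\overline{A}(R/2)\leq\overline{A}(R)$ upgrades the constant. The leftover errors are of order $R^{C-N}$ and $R^{-N/2}\overline{A}(R/2)$; using the crude bound~(\ref{pba}) to write $\overline{A}(R)\lesssim R^{C}$, both are at worst of size $R^{-N/3}$, which is dwarfed by $c^{-C}(E(\phi)^{1/2}E(\psi)^{1/2})^{1/q}E_{r,C_0Q_R}^{1/q'}\gtrsim c^{-C}\geq 2^{CC_0}$ under the assumption $c\leq 2^{-C_0}$. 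This delivers $A(R,r,r')\leq(1+Cc)\overline{A}(R)+c^{-C}$. The principal technical nuisance is keeping track of the margin losses of size $CR^{-1+1/N}$ that $P_B$ introduces by~(\ref{eq3.3}) and checking that they fit inside the gap $(1/R)^{1/N}$ between the strict and relaxed margin requirements; this is automatic for $R\geq 2^{NC_1/2}$.
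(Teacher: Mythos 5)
Your argument is essentially the same as the paper's. The paper localizes with $P_D$ where $D = D(x_{Q_R},t_{Q_R};r/2)$; you use a disk of radius $C_*R$ for a suitably large constant $C_*$, but since $r\geq C_0^CR$ both choices of radius lie between $CR$ and $r$, and both serve the same two roles (the enlargement $D_+$ stays inside a radius-$r$ disk so that (\ref{eq3.6}) lands in $E_{r,C_0Q_R}$, while $C^{-1}\cdot\text{radius} \gtrsim R$ so that finite speed of propagation (\ref{eq3.9}) kills the cross terms on $Q_R$). The decomposition of $\phi\psi - (P_D\phi)(P_D\psi)$, the appeal to Proposition~\ref{p5.2}, the use of a polynomial bound on $\overline{A}$ to absorb the $R^{C-N}$ errors into $c^{-C}$, and the margin bookkeeping all match the paper's proof; the proposal is correct.
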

  
  \begin{proof} Let $\phi,\psi$ be respectively red and blue waves  that satisfy the strict margin requirement (\ref{smr}), and the energy normalization. Then it is enough to prove that 
  \[ \left\| \phi \psi \right\|_{L^q_tL^p_x(Q_R)} \leq E_{r,C_0Q_R}(\phi,\psi)^{1/q'}(1+Cc)\overline{A}(R) +c^{-C}\]
  where $Q_R$ is an arbitrary cube of side-length $R$. Let $D:=D(x_{Q_R},t_{Q_R};r/2)$ where $(x_{Q_R},t_{Q_R})$ is the center of $Q_R$. We will decompose our waves: $\phi= (1-P_D)\phi+P_D\phi$, $\psi=(1-P_D)\psi+P_D\psi$. By Lemma \ref{l3.1}  $P_D\phi$, $P_D\psi$ satisfy relaxed margin requirement (\ref{rm}) and the energy estimate
  \[E(P_D\phi)^{1/2}E(P_D\psi)^{1/2} \leq E_{r,C_0Q_R}(\phi,\psi)+CR^{C-N/2}.\]
  So we can apply Proposition \ref{p5.2} to get
  \[\left\| (P_D\phi) (P_D\psi )\right\|_{L^q_tL^p_x(Q_R)} \leq (1+Cc)(E_{r,C_0Q_R}(\phi,\psi)+CR^{C-N/2})\overline{A}(R) + c^{-C}.\]
  Using a trivial polynomial bound on  $\overline{A}(R)$ we absorb $CR^{C-N}$ into $c^{-C}$. Hence we will be done if we can show that
 
  \[ \left\| ((1-P_D)\phi) \psi \right\|_{L^q_tL^p_x(Q_R)}, \left\| (P_D\phi) (1-P_D)\psi \right\|_{L^q_tL^p_x(Q_R)} \leq c^{-C}.\]
  Both of these follow from (\ref{eq3.9}) and its analogue for blue waves. 
\end{proof}

 We now turn to concentrated case.

\begin{proposition}\label{p5.6} Let $R\geq C_02^{NC_1/2}$ and $C_0^C R \geq r > R^{1/2+3/N}$. Then we have 
 \[A(R,r,r')\leq (1+Cc)A(R/C_0,r(1-Cr^{-1/3N}),r') + c^{-C}(1+\frac{R}{r'})^{-\epsilon/4}   \]
 for any $0< c \leq 2^{-C_0}$.
 
\end{proposition}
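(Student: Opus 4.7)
The proof plan mirrors Proposition \ref{p5.2}, but we keep the cone neighborhood $C^P(x_0,t_0;r')$ throughout and invoke persistence of non-concentration so as to iterate simultaneously in the scale $R$ and in the concentration radius $r$. I impose the energy normalization $E(\phi)=E(\psi)=1$ and bound the norm
\[\|\phi\psi\|_{L^q_tL^p_x(Q_R\cap C^P(x_0,t_0;r'))}\]
for a fixed cube $Q_R$ of side-length $R$, a fixed $(x_0,t_0)$, and waves obeying the strict margin requirement (\ref{smr}).

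First I apply Lemma \ref{lx} to $F=\phi\psi\,\chi_{C^P(x_0,t_0;r')}$ to obtain a cube $Q\subset C^2 Q_R$ of side-length $CR$ such that the norm on $Q_R\cap C^P$ is bounded by $(1+Cc)$ times the same norm on $I^{c,C_0}(Q)\cap C^P$. I then invoke Proposition \ref{p4.2} on $Q$ to produce red and blue wave tables $\Phi,\Psi$ of depth $C_0$ enjoying the margin estimate (\ref{eq4.5}), the energy estimate (\ref{eq4.6}), and persistence of non-concentration (\ref{eq4.9}). The cone-localized inequality (\ref{eq4.8}) then yields
\[\|\phi\psi\|_{L^q_tL^p_x(I^{c,C_0}(Q)\cap C^P)} \leq \|[\Phi]_{C_0}[\Psi]_{C_0}\|_{L^q_tL^p_x(I^{c,C_0}(Q)\cap C^P)} + c^{-C}(1+R/r')^{-\epsilon/4},\]
which is exactly the source of the additive error term in the proposition.

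Next I disassemble the quilt. On each sub-cube $q\in K_{C_0}(Q)$ the product collapses to $|\Phi^{(q)}\Psi^{(q)}|$, and since $q<p$ Lemma \ref{l2.1} gives the disjoint-support inequality at exponent $q$. Each sub-cube has side-length $R/C_0$; because $C_0\gg 3^N$, the margin bound (\ref{eq4.5}) implies the strict margin requirement (\ref{smr}) at the smaller scale. Hence Definition \ref{d5.2} applies with parameters $(R/C_0,\,r'',\,r')$, where $r'':=r(1-C_0 r^{-1/3N})$, and bounds the $L^q_tL^p_x$ norm of $\Phi^{(q)}\Psi^{(q)}$ on $q\cap C^P$ by $A(R/C_0,r'',r')$ times the corresponding energy and concentration factors. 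The concentration $E_{r'',C_0 q}(\Phi,\Psi)$ is monotone in its domain, hence dominated by $E_{r'',C_0 Q}(\Phi,\Psi)$, which by persistence (\ref{eq4.9}) is in turn bounded by $E_{r,C_0 Q_R}(\phi,\psi)+Cc$.

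Finally I assemble the pieces. Cauchy-Schwarz on $\sum_q E(\Phi^{(q)})^{1/2}E(\Psi^{(q)})^{1/2}$ together with (\ref{eq3.1}) and the Bessel-type bound (\ref{eq4.6}) yields a total energy factor $(1+Cc)^{1/q}$, while the additive $Cc$ inside the concentration bracket is absorbed into a multiplicative $(1+Cc)$ using the uniform lower bound $E_{r,C_0 Q_R}(\phi,\psi)\geq 1/2$ coming from the definition of energy concentration. Taking $q$th roots and dividing out the energy and concentration factors on the left then gives the claimed inequality. The main obstacle, modest but essential, is constant-bookkeeping: one must check that the margin loss in (\ref{eq4.5}) does not exceed what (\ref{smr}) allows at scale $R/C_0$, that the enclosing cube of $C_0 q$ is contained in a bounded dilate of $C_0 Q_R$ so the persistence inequality (\ref{eq4.9}) actually delivers concentration control against the original waves $(\phi,\psi)$ on $C_0 Q_R$, and that the hypothesis $r>R^{1/2+3/N}$ of the proposition justifies invoking (\ref{eq4.9}) in the first place.
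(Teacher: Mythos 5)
Your proposal reproduces the paper's own argument: localize with Lemma \ref{lx}, pass to the wave tables $\Phi,\Psi$ of Proposition \ref{p4.2} via the cone-localized inequality (\ref{eq4.8}) (which is precisely where the $c^{-C}(1+R/r')^{-\epsilon/4}$ error enters), break the quilt into sub-cubes with Lemma \ref{l2.1}, apply Definition \ref{d5.2} at the smaller scale with the reduced radius, pull the concentration back through persistence (\ref{eq4.9}) and monotonicity, and close with Cauchy--Schwarz, (\ref{eq3.1}), (\ref{eq4.6}), and the lower bound $E_{r,C_0Q_R}(\phi,\psi)\geq\frac12$ to absorb the additive $Cc$. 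The constant-bookkeeping concerns you flag at the end (compatibility of the margin loss with (\ref{smr}) at the smaller scale, domain comparability for the concentration quantity, and the hypothesis $r>R^{1/2+3/N}$ licensing (\ref{eq4.9})) are exactly the ones the paper implicitly relies on as well; this is essentially the same proof.
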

 
 \begin{proof}Let $Q_R$ be a  spacetime cube of side-length R,  $(x_0,t_0)$ be an element of $\textbf{R}^{n+1}$, $\phi$ and $\psi$ respectively red and blue waves   that obey the strict margin requirement (\ref{smr}). Then it is enough to prove that 
 \begin{align*}
  \left\| \phi \psi \right\|_{L^q_tL^p_x(Q_R \cap C^{P}(x_0,t_0;r'))} &\leq (1+Cc)A(R/C_0,\widetilde{r},r')E_{r,C_0Q_R}(\phi,\psi)^{1/q'}\\ &+ c^{-C}(1+\frac{R}{r})^{-\epsilon/4} 
  \end{align*}
 where  $\widetilde{r} = r(1-Cr^{-1/3N})$ since $E_{r,C_0Q_R}(\phi,\psi) \approx 1$.
 
 We will perform some reductions. By Lemma \ref{lx} applied to $\phi \psi \chi_{C^{P}(x_0,t_0;r)}$ there is a cube $Q$ of side-length $CR$ contained in $C^2Q_R$ such that
 \[\| \phi \psi \|_{L^q_tL^p_x(Q_R \cap C^P(x_0,t_0;r'))} \leq  (1+Cc)\left\| \phi \psi \right\|_{L^q_tL^p_x(I^{c,C_0}(Q) \cap C^P(x_0,t_0;r'))}.\] 
 Applying Proposition \ref{p4.2} we reduce to showing 
 \begin{align*}
 \| [\Phi]_{C_0}[\Psi]_{C_0} \|_{L^q_tL^p_x(I^{c,C_0}(Q) \cap C^{P}(x_0,t_0;r'))} &\leq (1+Cc)A(R/C_0,\widetilde{r},r')E_{r,C_0Q_R}(\phi,\psi)^{1/q'}\\ &+ c^{-C}R^{C-N/2}. 
  \end{align*}
  Using (\ref{eq4.9}) it suffices to prove that
 \[ \| [\Phi]_{C_0}[\Psi]_{C_0} \|_{L^q_tL^p_x(I^{c,C_0}(Q) \cap C^{P}(x_0,t_0;r'))} \leq (1+Cc)A(R/C_0,\widetilde{r},r')E_{\widetilde{r},C_0Q}(\Phi,\Psi)^{1/q'}.\] 
 Using Lemma \ref{l2.1} it is enough to prove that
 \[ \sum_{q \in \textit{K}_{C_0}(Q)} \| \Phi^{(q)} \Psi^{(q)} \|^q_{L^q_tL^p_x(q\cap C^P(x_0,t_0;r'))} \leq (1+Cc)A(R/C_0,\widetilde{r},r')^q E_{\widetilde{r},C_0Q}(\Phi,\Psi)^{q/q'}. \]
 
  The observation $E_{\widetilde{r},C_0Q}(\Phi^{(q)},\Psi^{(q)}) \leq E_{\widetilde{r},C_0Q}(\Phi,\Psi)$ together with Definition \ref{d5.2} yield
  \[ \| \Phi^{(q)} \Psi^{(q)} \|^q_{L^q_tL^p_x(q\cap C^P(x_0,t_0;r'))} \leq A(R/C_0,\widetilde{r}, r')^{q} E(\Phi^{(q)})^{1/2}E(\Psi^{(q)})^{1/2} E_{\widetilde{r},C_0Q}(\Phi,\Psi)^{q/q'} . \]
   But then summing up followed by Cauchy-Schwarz and (\ref{eq4.6}) will yield the desired result.
 
 \end{proof}
  
 We combine these two propositions to obtain the following corollary.
 
 \begin{corollary}
  Let $R\geq 2^{NC_1}$ and $r\geq R^{1/2+4/N}$. Then we have
  \[A(R,r,C_0(1+r))\leq (1+Cc)\overline{A}(R)+c^{-C}\]
  for any $0< c\leq 2^{-C_0}$.
  \end{corollary}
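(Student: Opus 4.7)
The plan is to chain Propositions \ref{p5.5} and \ref{p5.6}, splitting into two cases according to the relative size of $r$ and $R$. If $r \geq C_0^C R$ (the non-concentrated regime), Proposition \ref{p5.5} applied with $r' = C_0(1+r)$ directly yields $A(R,r,C_0(1+r)) \leq (1+Cc)\overline A(R) + c^{-C}$. Otherwise $R^{1/2+4/N} \leq r < C_0^C R$, and I iterate Proposition \ref{p5.6} to reduce the scale.

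I set $R_0 := R$, $r_0 := r$, $R_{k+1} := R_k/C_0$, $r_{k+1} := r_k(1 - C r_k^{-1/3N})$, keeping $r' := C_0(1+r)$ fixed throughout. Because the relative perturbation at each step is $O(r_k^{-1/3N})$ and $r_k$ is large, the sequence $r_k$ stays essentially equal to $r$, so the hypothesis $R_k^{1/2+3/N} < r_k$ of Proposition \ref{p5.6} persists; after $k^* = O(\log_{C_0}(R/r))$ iterations one enters the non-concentrated regime $r_{k^*} \geq C_0^C R_{k^*}$, at which point Proposition \ref{p5.5} closes the recursion. (If $R_k$ ever drops below the validity threshold of the propositions the crude bound \eqref{cn} terminates things with a constant loss.) Unwinding the iteration produces
\[
A(R,r,r') \leq (1+Cc)^{k^*+1}\overline A(R) + c^{-C}\sum_{k=0}^{k^*}(1+Cc)^k \bigl(1 + R_k/r'\bigr)^{-\epsilon/4},
\]
after using the monotonicity $\overline A(R_{k^*}) \leq \overline A(R)$.

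In the additive sum, $R_k/r' = R/(C_0^k r')$ shrinks by a factor $C_0$ per step, so $(1+R_k/r')^{-\epsilon/4}$ grows by $C_0^{\epsilon/4}$ per step; the summands therefore form a geometric progression of ratio $(1+Cc)C_0^{\epsilon/4} > 1$, dominated by its last term. Since $C_0^{k^*} \asymp R/r \asymp R/r'$ at termination, that last term is $O((1+Cc)^{k^*})$, making the total additive error $O((1+Cc)^{k^*} c^{-C})$.

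The main obstacle is that $k^* = O(\log_{C_0}(R/r))$ is unbounded in $R$, so naively $(1+Cc)^{k^*+1}$ does not collapse to $(1+Cc)$. I resolve this by invoking Proposition \ref{p5.6} at the $j$-th step with an adaptively chosen parameter $c_j$ subject to $\sum_j c_j = O(c)$ (for instance $c_j := c/j^2$); then $\prod_j(1+Cc_j) \leq \exp(C \sum_j c_j) = 1 + O(c)$, yielding the correct leading factor $(1+Cc)\overline A(R)$. The resulting losses $\sum_j c_j^{-C}$ are absorbed into the conclusion's single $c^{-C}$ using the paper's convention that $C$ in $c^{-C}$ varies from line to line, together with the constant hierarchy $c^{-1} \leq 2^{C_0}$.
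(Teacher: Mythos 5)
Your case split and iteration scheme (apply Proposition~\ref{p5.5} when $r\geq C_0^C R$, otherwise iterate Proposition~\ref{p5.6} down the scales $R_k=R/C_0^k$, $r_{k+1}=r_k(1-Cr_k^{-1/3N})$ with $r'=C_0(1+r)$ held fixed) are exactly the paper's, and you correctly diagnose that a uniform choice $c_j=c$ leaves an unacceptable $(1+Cc)^{k^*}$ factor. But your fix $c_j:=c/j^2$ repairs only the multiplicative product $\prod_j(1+Cc_j)$ and does \emph{not} control the additive error; the claim that ``the resulting losses $\sum_j c_j^{-C}$ are absorbed into the conclusion's single $c^{-C}$'' is false. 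What must be bounded is $\sum_{j=1}^{k^*}c_j^{-C}\bigl(1+R_{j-1}/r'\bigr)^{-\epsilon/4}$, and with $c_j=c/j^2$ one has $c_j^{-C}=c^{-C}j^{2C}$. The factor $(1+R_{j-1}/r')^{-\epsilon/4}\lesssim C_0^{-(k^*-j-O(1))\epsilon/4}$ indeed decays geometrically as $j$ decreases, which kills the polynomial $j^{2C}$ for $j\ll k^*$; but the final $O(1)$ terms near $j=k^*$, where $R_{j-1}\lesssim r'$ and the cone factor is $\approx 1$, each contribute $\approx c^{-C}(k^*)^{2C}$, so the sum is $\gtrsim c^{-C}(k^*)^{2C}$. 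Since $k^*\gtrsim\log_{C_0}(R/r)$ is unbounded (e.g.\ when $r\approx R^{1/2+4/N}$), this is not $O(c^{-C})$ uniformly in $R$; the hierarchy (which you state backwards --- it is $c^{-1}\geq 2^{C_0}$) cannot absorb a factor that grows without bound in $R$.

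The resolution, and the paper's actual choice, is to make $c_j$ decay \emph{geometrically} in $J-j$ rather than polynomially in $j$: the paper takes $c_j:=C_0^{-1}c\,C_0^{(j-J)\epsilon/4C}$, so that $c_j^{-C}$ grows geometrically in $J-j$ at a rate matched against the geometric decay of $(1+R_{j-1}/r')^{-\epsilon/4}$. Then the $j$-th error term decays (away from $j=J$) and the sum is $\lesssim c^{-C}$, while $\sum_j c_j$ is still a geometric series $\lesssim c/C_0$, giving $\prod_j(1+Cc_j)\leq 1+Cc$ as well. The whole point of the cone-neighborhood gain $(1+R/r')^{-\epsilon/4}$ in Proposition~\ref{p5.6} is precisely to permit this exponentially-small choice of $c_j$ at the early (large-scale) steps; a polynomial schedule like $c/j^2$ squanders it and leaves an unbounded loss.
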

 
 \begin{proof}
 We can assume that $r < C_0^CR $ since the claim otherwise follows from Proposition \ref{p5.5}. Let $J$ be the least integer such that  $r\geq C_0^{-J}C_0^CR$. Since $r\geq R^{1/2+4/N}$, this implies $J \lesssim \log r$. Define $r:=r_0 >r_1>\ldots>r_J$ recursively by $r_{j+1}=r_j(1-Cr_j^{-1/3N})$. The sequence $\{r_j\}_{j=0}^J$ decreases  slowly, and has only about $\log r$ terms, thus $r_J \approx r$. For $0 < j  \leq J$ define $c_j:=C_0^{-1}cC_0^{(j-J)\epsilon/4C}$. Using these values of $c_j$ and $r_j$ we iterate Proposition \ref{p5.6} to obtain
 \[A(R,r,C_0(1+r)) \leq (1+Cc)A(R/C_0^J,r_J,C_0(1+r))+c^{-C}.\]
 Now we can use Proposition \ref{p5.5}, which applied to right hand side yields the desired result.
  
 \end{proof}

 Now we are in a position to show (\ref{eq2.2}). Combining the last result with Proposition \ref{p5.3} and setting $c=2^{-C_1}$ one sees that 
 \[A(R)\leq (1-C_0^{-C}) \overline{A}(R) +2^{CC_1}\]
 holds if $R \geq 2^{NC_1}$. Combining with (\ref{cn}) this extends to $R \geq 2C_02^{C_1}$. Using (\ref{pba}) we further extend it to $R \geq 2^{C_1/2}$, and hence can take the supremum for the left hand side to get
 \[\overline{A}(R)\leq (1-C_0^{-C}) \overline{A}(R) +2^{CC_1}\]
 for all $R\geq 2^{C_1/2}$. From this clearly (\ref{eq2.2}) follows.

  %=====================================================================================================================================================================================================================================================================================================================================================================================================

 \end{document}